\setlist[enumerate]{leftmargin=.5in} 
\setlist[itemize]{leftmargin=.5in} 
\providecommand{\keywords}[1]{\textbf{Keywords: } #1} 
\def\natural{\mathbb{N}}
\def\integer{\mathbb{Z}}
\def\real{\mathbb{R}}
\let\originalleft\left
\renewcommand{\left}{\mathopen{}\mathclose\bgroup\originalleft} 
\let\originalright\right\renewcommand{\right}{\aftergroup\egroup\originalright} 
\newtheorem*{theorem*}{Theorem} 
\newtheorem{theorem}{Theorem}[section] 
\newtheorem{lemma}[theorem]{Lemma} 
\newtheorem{proposition}[theorem]{Proposition}
\newtheorem{definition}[theorem]{Definition}
\newtheorem{remark}[theorem]{Remark}
\newtheorem{corollary}[theorem]{Corollary}
\newtheorem*{conjecture*}{Conjecture} 
\Crefname{conjecture}{Conjecture}{Conjectures}
\newsavebox{\measure@tikzpicture}
	\def\tikz@width{#1}%
	\def\tikzscale{1}\begin{lrbox}{\measure@tikzpicture}%
	\edef\tikzscale{\pgfmathresult}%
\begin{document}
	
	\title{Computing covariant Lyapunov vectors in Hilbert spaces}
\author{Florian Noethen
	\thanks{Fachbereich Mathematik, Universität Hamburg, Bundesstraße 55, 20146 Hamburg, Germany (\href{mailto:florian.noethen@uni-hamburg.de}{florian.noethen@uni-hamburg.de}).}
}
\date{\today} 
\maketitle

\begin{abstract}
	
	Covariant Lyapunov vectors (CLVs) are intrinsic modes that describe long-term linear perturbations of solutions of dynamical systems. With recent advances in the context of semi-invertible multiplicative ergodic theorems, existence of CLVs has been proved for various infinite-dimensional scenarios. Possible applications include the derivation of coherent structures via transfer operators or the stability analysis of linear perturbations in models of increasingly higher resolutions.\par 
	
	We generalize the concept of Ginelli's algorithm to compute CLVs in Hilbert spaces. Our main result is a convergence theorem in the setting of [González-Tokman, C. and Quas, A., \textit{A semi-invertible operator Oseledets theorem}, Ergodic Theory and Dynamical Systems, 34.4 (2014), pp. 1230-1272]. The theorem relates the speed of convergence to the spectral gap between Lyapunov exponents. While the theorem is restricted to the above setting, our proof requires only basic properties that are given in many other versions of the multiplicative ergodic theorem.
		
\end{abstract}

\keywords{covariant Lyapunov vectors (CLVs); multiplicative ergodic theorem, Ginelli algorithm; Hilbert spaces}

\hspace{1em}


\tableofcontents

\hypersetup{linkcolor=red}
	
	\section{Introduction}\label{sec:Intro}

	\emph{Covariant Lyapunov vectors} (CLVs) characterize the asymptotically most expanding directions in tangent space along trajectories in dynamical systems. They have been described as the ``physically relevant'' modes in dissipative systems \cite{TakeuchiYangGinelliRadonsChate.2011} and have been used to detect coherent structures, i.e., slow mixing sets, via the Perron-Frobenius operator \cite{FroylandLloydQuas.2013,FroylandLloydQuas.2010,GonzalezTokman.2018}, the dual of the Koopman operator. Recent research on coherent structures includes the analysis of large scale features of the ocean and atmosphere relevant for climate \cite[chapter 6]{GonzalezTokman.2018}. Apart from techniques involving transfer operators, CLVs have been used directly to analyze instabilities in coupled models. Two examples are the assessment of long-term predictability in an ocean-atmosphere model \cite{VannitsemLucarini.2016} and the decoupling of instabilities into modes associated to different time-scales to analyze mixing in a two-scale Lorenz 96 model \cite{CarluGinelliLucariniPoliti.2019}.\par 
	
	In this article, we generalize the concept of \textit{Ginelli's algorithm} \cite{GinelliPoggiTurchiChateLiviPoliti.2007} to compute CLVs for even infinite-dimensional settings. Our main contribution is a convergence result in the context of Hilbert spaces.\par
	
	To prove convergence and to guarantee existence of CLVs, we need the \textit{multiplicative ergodic theorem} (MET). While the original MET from 1968 is due to Oseledets \cite{Oseledets.1968}, until today various other versions emerged (e.g., see \cite{Arnold.1998,BarreiraSilva.2005,Blumenthal.2015,Doan.2010,FroylandLloydQuas.2010,FroylandLloydQuas.2013,GonzalezTokmanQuas.2014,GonzalezTokmanQuas.2015,LianLu.2010,Ruelle.1982}). Their application ranges from deterministic to stochastic systems in finite and infinite dimensions. Moreover, there is a distinction between non-invertible and invertible versions. Noninvertible versions only derive an \emph{Oseledets filtration}, whereas invertible versions yield an \textit{Oseledets splitting}. The corresponding spaces of the splitting are called \textit{Oseledets spaces} and give rise to the CLVs. Aside from non-invertible and invertible versions, there is a third, more recent class of \emph{semi-invertible} METs. Versions of this class still provide an Oseledets splitting and, for instance, can be applied to transfer operators. Several semi-invertible METs were proved for infinite-dimensional scenarios \cite{GonzalezTokmanQuas.2014,GonzalezTokmanQuas.2015,FroylandLloydQuas.2013}. Here, we follow the semi-invertible MET from \cite{GonzalezTokmanQuas.2014}. For an overview of the history of METs and applications to transfer operators in the context of non-autonomous systems, we highly recommend reading \cite{GonzalezTokman.2018}.\par 
	
	To prove existence of an Oseledets splitting, \cite{GonzalezTokmanQuas.2014} pushes forward a set of special complements of the Oseledets filtration from the far past to the present state along trajectories. Indeed, it turns out that complements of the Oseledets filtration will align with sums of the first Oseledets spaces in forward-time generically. This idea was used by Ginelli et al. to compute CLVs \cite{GinelliPoggiTurchiChateLiviPoliti.2007, GinelliChateLiviPoliti.2013} or, more generally, Oseledets spaces. The first part of their algorithm approximates sums of Oseledets spaces through past data, while the second part uses future data to extract an approximation of CLVs from the former approximations. Its dynamical description distinguishes Ginelli's algorithm from other approaches \cite{WolfeSamelson.2007,KuptsovParlitz.2012,FroylandHuelsMorrissWatson.2013} and makes it applicable to a wide range of scenarios.\par 
	
	We generalize an existing convergence result for invertible systems in finite dimensions \cite{Noethen.2019} to a broader setting on Hilbert spaces. While \cite{Noethen.2019} heavily focuses on a singular value decomposition of the linear propagator, as it appears in the proof of the MET from \cite{Arnold.1998}, we use a purely dynamical approach. With the help of \emph{well-separating common complements} \cite{Noethen.2019b} of the Oseledets filtration, we are able to prove the following: 	
	
	\begin{theorem*}
		In the setting of \cite{GonzalezTokmanQuas.2014} for Hilbert spaces, Ginelli's algorithm convergence for almost every initial configuration. The convergence is exponentially fast with a rate given by the spectral gap between corresponding Lyapunov exponents.
	\end{theorem*}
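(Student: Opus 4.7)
The first step is to unpack the algorithm inside the semi-invertible MET of \cite{GonzalezTokmanQuas.2014}. That theorem supplies, for $\mathbb{P}$-a.e.\ $\omega$, a measurable Oseledets splitting $H = Y_1(\omega) \oplus \cdots \oplus Y_p(\omega) \oplus V(\omega)$ with distinct Lyapunov exponents $\lambda_1 > \cdots > \lambda_p > \lambda_\infty$, each $Y_j(\omega)$ finite-dimensional and equivariant under the cocycle $\Phi$. Writing $F_k(\omega) := Y_1(\omega) \oplus \cdots \oplus Y_k(\omega)$ and $V_k(\omega)$ for its natural slow complement, Ginelli's algorithm in this context starts with a $k$-dimensional initial configuration $U(\theta^{-n}\omega)$ in the far past, iterates it forward by $\Phi(n, \theta^{-n}\omega)$ with Gram--Schmidt re-orthonormalization at each step, and then uses the accumulated upper-triangular factors from a second forward sweep to back-substitute individual CLVs at $\omega$. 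My plan is to treat the two phases separately and bound the error of each by an explicit multiple of the spectral gap.

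For the forward phase, I would reduce geometric convergence of subspaces to a ratio-of-norms argument. The crucial genericity statement is that, for almost every choice of $U$, the pushed-back fibres $U(\theta^{-n}\omega)$ remain transverse to $V_k(\theta^{-n}\omega)$ with only a subexponential deterioration in the angle. This is exactly what the well-separating common complements of \cite{Noethen.2019b} buy: a measurable assignment $\omega \mapsto U(\omega)$ of complements of the filtration whose principal angles with $V_k(\cdot)$ decay at most subexponentially along orbits. Once this is in hand, any $u \in U(\theta^{-n}\omega)$ decomposes as $u = u^F + u^V$ with $u^F \in F_k(\theta^{-n}\omega)$ growing like $e^{n\lambda_k + o(n)}$ and $u^V \in V_k(\theta^{-n}\omega)$ growing at most like $e^{n\lambda_{k+1} + o(n)}$. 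Dividing yields a bound of order $e^{-n(\lambda_k - \lambda_{k+1}) + o(n)}$ on the gap between $\Phi(n,\theta^{-n}\omega)U(\theta^{-n}\omega)$ and $F_k(\omega)$, and I would verify that Gram--Schmidt in Hilbert space is Lipschitz on the relevant open set of configurations, so the bound transfers to the orthonormal basis returned by the algorithm.

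For the backward phase, I would exploit that after Phase~1 the current orthonormal basis spans an approximation of $F_k(\omega)$, so pushing forward from $\omega$ to $\theta^m \omega$ and collecting the QR factors $R_j$ produces a telescoping upper-triangular product whose diagonal is asymptotic to the Lyapunov exponents. Back-substitution then amounts to inverting a triangular operator whose off-diagonal ratios are controlled by $e^{-m(\lambda_{k-1} - \lambda_k)}$; a geometric series estimate yields exponential convergence of the $k$-th recovered direction to $Y_k(\omega)$ at the rate set by the smaller of the two adjacent spectral gaps. Because the iterations only invoke finitely many triangular blocks, infinite-dimensional complications are confined to the already-treated forward phase.

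The principal obstacle will be the forward phase, because the setting is only semi-invertible and $V_k(\omega)$ is typically infinite-dimensional: there is no singular-value decomposition to fall back on, as in \cite{Noethen.2019}, and one cannot run $\Phi$ backwards to interpret the push-forward through a transfer operator on the initial fibre. Controlling the angle $\angle(U(\theta^{-n}\omega), V_k(\theta^{-n}\omega))$ therefore has to be done purely dynamically, which is where well-separating common complements are indispensable and also where the measurability of the construction must be checked most carefully. Once that subexponential angle bound is secured and combined with the exponent-gap estimate, both phases collapse to elementary geometric-series arguments and the overall exponential rate $\min(\lambda_k - \lambda_{k+1}, \lambda_{k-1} - \lambda_k)$ emerges.
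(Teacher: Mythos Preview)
Your forward-phase outline matches the paper closely: both rely on the disparity between growth rates on $Y_1\oplus\cdots\oplus Y_j$ and on $V_{j+1}$, together with the well-separating common complements of \cite{Noethen.2019b}, to obtain exponential alignment at rate $\lambda_j-\lambda_{j+1}$. One caveat: well-separating complements do not furnish a \emph{measurable assignment} $\omega\mapsto U(\omega)$ as you write; rather, for fixed $\omega$, prevalence guarantees that almost every tuple $(x_1,\dots,x_k)\in H^k$ spans a subspace $W$ whose angle with $V_{j+1}(\sigma^{-n}\omega)$ decays only subexponentially in $n$. The initial subspace is fixed, not selected along the orbit, and no measurable-selection argument is needed.

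For the backward phase your route genuinely diverges from the paper. You propose to analyze the accumulated upper-triangular $R$-product directly, arguing that its diagonal entries behave like $e^{m\lambda_i}$ and that off-diagonal ratios decay like $e^{-m(\lambda_{j-1}-\lambda_j)}$, so that back-substitution converges geometrically. The paper instead never inspects the $R$-factors: it proves a purely geometric estimate (\cref{lemma:MapEstimateBT} and \cref{cor:MapEstimateBT}) bounding the distance from the backward-propagated subspace to $Y_j(\omega)$ in terms of forward-time quantities---the ratio $\|\mathcal{L}(n_2)|_{V_1}\|/\inf_{y\in Y_1\cap S}\|\mathcal{L}(n_2)y\|$ and the projection norm $\|\Pi_{V_2\|Y_1\oplus Y_2}|_{\mathcal{L}(-n_1)W_2}\|$. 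The coefficient space enters only to verify a uniform transversality lower bound (\cref{eqn:SequenceEstimateBT}), not to drive the estimate. The paper's advantage is that every term is directly controlled by the MET and by \cref{cor:SequenceEstimateFT}; in particular, the Phase-1 error---the fact that $\mathcal{L}(-n_1)W_2$ is only approximately $Y_1\oplus Y_2$---appears explicitly as the projection term and decays at rate $\lambda_j-\lambda_{j+1}$. In your sketch this coupling is the missing link: the claimed structure of the $R$-product holds cleanly only if the orthonormal frame at $\omega$ already respects the Oseledets flag, and you would need to quantify how the Phase-1 discrepancy perturbs the diagonal and off-diagonal entries before the geometric-series argument can be made rigorous. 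This is not fatal, but it is the step where your proposal is thinnest, and it is precisely the step the paper's subspace-level argument is designed to bypass.
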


	Even though the theorem is proved for Hilbert spaces, many arguments of the proof hold for Banach spaces. In fact, we formulate the majority of the theory and tools for Banach spaces. Moreover, we only require basic properties stated in the MET. Hence, our results may be translated to other scenarios apart from \cite{GonzalezTokmanQuas.2014}.\par 
	
	We begin the article by laying foundations. \cref{subsec:Grassmannians} introduces \emph{Grassmannians}. They naturally appear in the MET and are essential for our convergence proof later on. In \cref{subsec:MET} we state the MET from \cite{GonzalezTokmanQuas.2014}. We extract basic asymptotic properties found in the proof of the MET. Those properties are not unique to \cite{GonzalezTokmanQuas.2014}, but can also be found in other versions of the MET.\par 
	
	\cref{sec:ComputingCLVs} presents our new research. After defining Ginelli's algorithm in \cref{subsec:Ginelli}, we devote the remaining subsections to prove our convergence theorem. \cref{subsec:ForwardTime} treats forward propagation, whereas \cref{subsec:BackwardTime} adds backward propagation along certain subspaces to the forward propagation. Both subsections are formulated in the context of maps on Banach spaces. Hence, they can be applied to a potentially larger class of systems than given by \cite{GonzalezTokmanQuas.2014}. In \cref{subsec:ConvergenceProof} we combine the derived tools to come up with a convergence proof of Ginelli's algorithm on Hilbert spaces.

\bigskip
	
	\section{Setting}\label{sec:Setting}

	Before introducing Ginelli's algorithm, we need to derive Oseledets spaces and their asymptotic properties from the MET. Oseledets spaces are finite-dimensional subspaces complemented by closed subspaces of the Oseledets filtration. In particular, they are elements of the Grassmannian of $X$. Understanding pairs of complementary subspaces from the MET is fundamental for our convergence proof, since they encode the different asymptotic growth rates of linear perturbations.\par
	
	Let us start by introducing Grassmannians.

\subsection{Grassmannians}\label{subsec:Grassmannians}
	
	\begin{definition}\label[definition]{def:Grassmannian}
		
		Let $X$ be a Banach space. The \emph{Grassmannian} $\mathcal{G}(X)$ is the set of closed complemented subspaces of $X$, i.e., closed subspaces $V\subset X$ such that there is a closed subspace $W\subset X$ with $X=V\oplus W$. It contains $\mathcal{G}_k(X)$, the set of $k$-dimensional subspaces, and $\mathcal{G}^k(X)$, the set of closed subspaces of codimension $k$.
		
	\end{definition}
	
	The Grassmannian $\mathcal{G}(X)$ can be equipped with a metric $\textnormal{d}_{\mathcal{G}}(V,W)$ via the Hausdorff distance between $V\cap B$ and $W\cap B$, where $B$ denotes the closed unit ball in $X$ \cite[appendix B]{GonzalezTokmanQuas.2014}:
	\begin{align*}
		\textnormal{d}_{\mathcal{G}}(V,W)&:=\textnormal{d}_H(V\cap B,W\cap B)\\
		&=\max\left(\underset{v\in V\cap B}{\sup}\,\textnormal{d}(v,W\cap B),\underset{w\in W\cap B}{\sup}\,\textnormal{d}(w,V\cap B)\right)\\
		&=\max\left(\underset{v\in V\cap B}{\sup}\,\underset{w\in W\cap B}{\inf}\,\|v-w\|,\underset{w\in W\cap B}{\sup}\,\underset{v\in V\cap B}{\inf}\,\|w-v\|\right)
	\end{align*}
	for $V,W\in\mathcal{G}(X)$. Another possible metric is given by exchanging $B$ with the unit sphere $S$ in the above definition \cite[chapter IV, §2.1]{Kato.1995}. As our definition of $\textnormal{d}_{\mathcal{G}}$ lies between the ``gap'' and the metric from \cite{Kato.1995}, both metrics induce the same topology and make $\mathcal{G}(X)$ into a complete metric space.\par
	
	The symmetry of $\textnormal{d}_{\mathcal{G}}$ is an immediate consequence of the symmetric definition of $\textnormal{d}_{H}$. This kind of definition is necessary, since $\sup_{v\in V\cap B}\,\textnormal{d}(v,W\cap B)$ and $\sup_{w\in W\cap B}\,\textnormal{d}(w,V\cap B)$ are different in general. However, if one term is small, then so is the other \cite[lemma B.7]{GonzalezTokmanQuas.2014}.
	
	\begin{lemma}\label[lemma]{lemma:SymmetryOfCloseness}
		
		If $V,W\in\mathcal{G}_k(X)$ are subspaces of dimension $k$, then
		\begin{equation*}
			\sup_{v\in V\cap B}\,\textnormal{d}(v,W\cap B)=:r<3^{-k}/4\hspace{1em}\implies\hspace{1em}\textnormal{d}_{\mathcal{G}}(V,W)<4\cdot 3^k r.
		\end{equation*}
		If $V,W\in\mathcal{G}^k(X)$ are closed subspaces of codimension $k$, then
		\begin{equation*}
		\sup_{v\in V\cap B}\,\textnormal{d}(v,W\cap B)=:r<3^{-k}/8\hspace{1em}\implies\hspace{1em}\textnormal{d}_{\mathcal{G}}(V,W)<8\cdot 3^k r.
		\end{equation*}
		
	\end{lemma}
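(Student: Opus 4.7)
The plan is to bound the ``reverse'' one-sided distance $\sup_{w\in W\cap B}\textnormal{d}(w,V\cap B)$ in terms of $r$, since $\textnormal{d}_{\mathcal{G}}(V,W)$ is the maximum of the two one-sided suprema and one of them is $r$ by assumption. So in both cases the task reduces to transferring a one-sided closeness statement to the other side while tracking the constants.

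For the dimension-$k$ case, I would invoke Auerbach's lemma to pick a basis $v_1,\dots,v_k$ of $V$ with unit norm and dual functionals $\phi_1,\dots,\phi_k\in V^*$ of unit norm such that $\phi_i(v_j)=\delta_{ij}$. Compactness of $V\cap B$ and closedness of $W\cap B$ give $w_i\in W\cap B$ with $\|v_i-w_i\|\leq r$. The first key step is to verify that $w_1,\dots,w_k$ span $W$: if $\sum_i c_iw_i=0$, then $\max_i|c_i|\leq\|\sum_i c_iv_i\|=\|\sum_i c_i(v_i-w_i)\|\leq r\sum_i|c_i|\leq kr\max_i|c_i|$, which forces $c_i=0$ as soon as $kr<1$.

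Given $w=\sum_i c_iw_i\in W\cap B$, the same estimate yields $(1-kr)\max_i|c_i|\leq\|w\|\leq 1$. The vector $v:=\sum_i c_iv_i\in V$ then satisfies $\|v-w\|\leq kr/(1-kr)$, and radially projecting $v$ back into $V\cap B$ when $\|v\|>1$ at most doubles this distance, giving $\textnormal{d}(w,V\cap B)\leq 2kr/(1-kr)$. Under the hypothesis $r<3^{-k}/4$, the quantity $kr$ is safely below $1$, and the elementary inequality $2kr/(1-kr)<4\cdot 3^kr$ closes this half of the lemma.

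For the codimension-$k$ case, I would run an analogous argument in the quotient space $X/V$, which has dimension $k$, using an Auerbach basis of $X/V$ (equivalently, of the annihilator $V^\perp\subset X^*$). The main obstacle here is that switching between the ambient norm and the quotient norm interacts poorly with the unit-ball constraint in the definition of $\textnormal{d}_{\mathcal{G}}$: a representative realising a small quotient norm need not itself lie in $B$, and the one-sided distance in the statement is measured between $V\cap B$ and $W\cap B$ rather than between $V$ and $W$. I expect the extra factor of $2$ (hence $8\cdot 3^k$ in place of $4\cdot 3^k$) to arise precisely from lifting ball-constrained estimates through the quotient, which is the step most likely to require a separate supporting lemma comparing $\textnormal{d}(\,\cdot\,,Y\cap B)$ with $\textnormal{d}(\,\cdot\,,Y)$.
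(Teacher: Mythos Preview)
The paper does not give its own proof of this lemma; it simply quotes the result as \cite[lemma~B.7]{GonzalezTokmanQuas.2014}. So there is no in-paper argument to compare against, and your proposal should be read as an independent proof attempt.

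Your treatment of the $\mathcal{G}_k(X)$ case is correct and complete. The Auerbach basis gives the coefficient control $\max_i|c_i|\leq\|\sum_ic_iv_i\|$, the perturbation bound $\|v_i-w_i\|\leq r$ yields linear independence of the $w_i$ once $kr<1$, and the estimate $\textnormal{d}(w,V\cap B)\leq 2kr/(1-kr)$ follows exactly as you wrote. The final numerical check $2kr/(1-kr)<4\cdot 3^kr$ under $r<3^{-k}/4$ reduces to $3k/2<2\cdot 3^k$, which is immediate. (A cosmetic point: you do not actually need compactness of $V\cap B$ to choose the $w_i$; the $v_i$ are fixed points and $W\cap B$ is compact since $\dim W=k$, so the infimum is attained.)

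For the $\mathcal{G}^k(X)$ case your sketch is in the right direction but, as you yourself flag, it is not yet a proof. The difficulty you identify is genuine: an Auerbach basis of $X/V$ lifts to functionals $\psi_1,\dots,\psi_k\in X^*$ of unit norm vanishing on $V$, but the dual objects---representatives $y_i\in X$ with $\psi_i(y_j)=\delta_{ij}$---only satisfy $\|y_i\|\leq 1+\epsilon$ in general, and when you try to approximate a given $w\in W\cap B$ by something in $V\cap B$ you must control both the coefficient part and the ``$V$-part'' of $w$ simultaneously inside the unit ball. This is doable (and is how the extra factor of $2$ enters), but it requires writing $w=v_0+\sum_i\psi_i(w)y_i$ with $v_0\in V$, bounding $\|v_0\|$ via $\|w\|$ and the $\psi_i(w)$, and then carefully adjusting the $y_i$ toward $W$ using the hypothesis. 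Until those steps are written out, the codimension half remains a plan rather than a proof.
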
 

	Thus, when investigating convergence inside $\mathcal{G}_k(X)$ or $\mathcal{G}^k(X)$, it is enough to estimate only one of the two terms in the definition of $\textnormal{d}_{H}$.\par

	Ultimately, we want to approximate Oseledets spaces, which are special finite-dimensional complements of spaces of the Oseledets filtration. Hence, we will be working with tuples of the set
	\begin{equation*}
		\textnormal{Comp}_k(X):=\{(Y,Z)\in\mathcal{G}_k(X)\times\mathcal{G}^k(X)\ |\ X=Y\oplus Z\}
	\end{equation*}
	for $k\in\natural$. Given such a tuple, each $x\in X$ can be written uniquely as $x=y+z$ according to the associated splitting. In particular, we get two projections $\Pi_{Y||Z}:X\to Y$ and $\Pi_{Z||Y}:X\to Z$, which are bounded linear operators by the closed graph theorem. It can be shown that they are stable with respect to perturbations of the tuple $(Y,Z)$ \cite[lemma B.18]{GonzalezTokmanQuas.2014}.
	
	\begin{lemma}\label[lemma]{lemma:ProjectionStable}
		
		The mapping $\textnormal{Comp}_k(X)\to\mathcal{L}(X)$ given by $(Y,Z)\mapsto\Pi_{Y||Z}$ is continuous, where $\textnormal{Comp}_k(X)$ has the product topology induced by $\mathcal{G}(X)$ and the space $\mathcal{L}(X)$ of bounded linear operators on $X$ is equipped with the norm topology.
		
	\end{lemma}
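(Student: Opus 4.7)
Fix a point $(Y_0,Z_0)\in\textnormal{Comp}_k(X)$ at which to verify continuity, set $P_0:=\Pi_{Y_0||Z_0}$, and let $M_0:=\|P_0\|$. My plan is to parametrise a small neighbourhood of $(Y_0,Z_0)$ in $\textnormal{Comp}_k(X)$ by pairs of graph maps over the reference splitting $X=Y_0\oplus Z_0$, and then derive a closed-form expression for the perturbed projection in terms of these graphs. First I will show that for $(Y,Z)$ sufficiently close to $(Y_0,Z_0)$ one still has $X=Y\oplus Z_0$ and $X=Y_0\oplus Z$. Indeed, for any unit vector $z\in Z_0$ and any $y\in Y_0$ the identity $P_0(z-y)=-y$ yields $\|y\|\le M_0\|z-y\|$, hence $\mathrm{d}(z,Y_0)\ge 1/(1+M_0)$. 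So if $\textnormal{d}_{\mathcal{G}}(Y,Y_0)<1/(1+M_0)$ no unit vector of $Y$ can lie in $Z_0$, and since $\dim Y=\operatorname{codim}Z_0=k$, this forces $X=Y\oplus Z_0$; a dual estimate gives $X=Y_0\oplus Z$. Consequently $Y$ is the graph of a bounded operator $A\colon Y_0\to Z_0$, that is, $Y=\{y+Ay:y\in Y_0\}$, and $Z$ is the graph of a bounded operator $C\colon Z_0\to Y_0$.

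The analytic heart of the argument is to upgrade Grassmannian convergence to operator-norm convergence of the graph maps: $\textnormal{d}_{\mathcal{G}}(Y,Y_0)\to 0$ should force $\|A\|\to 0$, and similarly for $C$. In one direction this is easy, since any unit vector $y\in Y_0$ lifts to $(y+Ay)/(1+\|A\|)\in Y$ (of norm $\le 1$) at distance $\le 2\|A\|$ from $y$. For the reverse direction, given a unit $y\in Y_0$, Hausdorff closeness to $Y$ will supply a vector $y^\ast\in Y$ of norm $\le 1$ near $y$; decomposing $y^\ast=y_0^\ast+Ay_0^\ast$ and using that the restriction $P_0|_Y\colon Y\to Y_0$ is an isomorphism whose inverse is uniformly controlled by $M_0$ will force $\|Ay\|$ to be small uniformly in $y$.

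With the graph representations in hand I solve for the $(Y,Z)$-decomposition of an arbitrary $x\in X$ explicitly. Setting $y_0:=P_0x$ and $z_0:=(I-P_0)x$ and writing the unique $Y\oplus Z$-decomposition of $x$ as $(y_0'+Ay_0')+(z_0'+Cz_0')$ yields the $2\times 2$ system $y_0'+Cz_0'=y_0$ and $Ay_0'+z_0'=z_0$. For $\|A\|\|C\|<1$ a Neumann series gives $y_0'=(I-CA)^{-1}(y_0-Cz_0)$, and therefore
\begin{equation*}
    \Pi_{Y||Z}x=(I+A)(I-CA)^{-1}\bigl(P_0x-C(I-P_0)x\bigr).
\end{equation*}
At $A=C=0$ this reduces to $P_0x$, and the right-hand side depends norm-continuously on $A$ and $C$. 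Combined with the previous paragraph, this gives $\|\Pi_{Y||Z}-P_0\|_{\mathcal{L}(X)}\to 0$ as $(Y,Z)\to(Y_0,Z_0)$. I expect the metric-comparison step — confirming that the product Grassmannian distance on $\mathcal{G}_k(X)\times\mathcal{G}^k(X)$ truly controls $\max(\|A\|,\|C\|)$ uniformly rather than only pointwise — to be the main technical obstacle; the closing Neumann-series identity is then a short bounded-operator calculation.
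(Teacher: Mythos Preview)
The paper does not actually supply a proof of this lemma; it merely cites it as \cite[lemma B.18]{GonzalezTokmanQuas.2014}. So there is nothing to compare against, and your proposal stands on its own.

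Your strategy---parametrising nearby pairs $(Y,Z)$ by graph maps $A\colon Y_0\to Z_0$ and $C\colon Z_0\to Y_0$ over the reference splitting, then solving the $2\times 2$ block system via a Neumann series---is the standard and correct route to this kind of result. The formula $\Pi_{Y||Z}=(I+A)(I-CA)^{-1}(P_0-C(I-P_0))$ is exactly right, and its continuity in $(A,C)$ near $(0,0)$ is immediate. The transversality estimate $\textnormal{d}(z,Y_0)\ge 1/(1+M_0)$ for unit $z\in Z_0$ is clean; the dual bound for $Y_0$ against $Z_0$ comes out as $1/(2+M_0)$ using $\|I-P_0\|\le 1+M_0$, which is equally sufficient. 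The passage from $Y\cap Z_0=\{0\}$ to $X=Y\oplus Z_0$ is justified by the fact that the quotient $X/Z_0$ is $k$-dimensional and the quotient map is injective on $Y$.

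Regarding the obstacle you flag: the control of $\|A\|$ and $\|C\|$ by the Grassmannian distance is indeed uniform, not merely pointwise. For $A$ you can use that $Y_0$ is finite-dimensional, so $A$ is automatically bounded, and a direct estimate of the type you sketch gives $\|A\|(1-M_0\epsilon)\le(1+M_0)\epsilon$ when $\textnormal{d}_{\mathcal{G}}(Y,Y_0)<\epsilon$. For $C$ the codomain $Y_0$ is finite-dimensional but the domain $Z_0$ need not be; here boundedness of $C$ follows first qualitatively from the open mapping theorem (since $(I-P_0)|_Z\colon Z\to Z_0$ is a bounded bijection of Banach spaces), after which the same self-improving inequality yields $\|C\|(1-(1+M_0)\epsilon)\le M_0\epsilon$. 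Both bounds depend only on $M_0$ and $\epsilon$, so the continuity is genuine.
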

 
	Finally, we need one more concept for Grassmannians. Given a sequence of subspaces $(V_n)_{n\in\natural}\subset\mathcal{G}^k(X)$, we ask for common complements, i.e., subspaces $W\subset X$ with $(W,V_n)\in\textnormal{Comp}_k(X)$ for all $n$. Natural questions are the existence and quantity of common complements. A recent paper links these questions to quality assumptions \cite{Noethen.2019b}. A complement is called \emph{well-separating} if the degree of transversality\footnote{Our notion of the degree of transversality coincides with the sine of the minimal angle between two subspaces of a Banach space from \cite{Blumenthal.2015}.} $\inf_{x\in W,\, \|x\|=1}\,\textnormal{d}(x,V_n)$ of the tuple $(W,V_n)$ decays at most subexponentially with $n$. Well-separating common complements can be used without interfering on exponential scales that are important for our convergence proof later on. 
	
	\begin{definition}\label[definition]{def:WellSeparating}
		
		Let $(V_n)_{n\in\natural}\subset\mathcal{G}^k(X)$ be given. A common complement $W\in\mathcal{G}_k(X)$ of $(V_n)_{n\in\natural}$ is called \emph{well-separating} w.r.t. $(V_n)_{n\in\natural}$ if
		\begin{equation}
			\lim_{n\to\infty}\frac{1}{n}\log \underset{x\in W\cap S}{\inf}\,\textnormal{d}(x,V_n)=0.
		\end{equation}
		
	\end{definition}

	Using the concept of \emph{prevalence} \cite{OttYorke.2005}, \cite{Noethen.2019b} proves that almost every tuple of vectors induces a well-separating common complement if there exists at least one such complement. Since existence is guaranteed in Hilbert spaces, we have the following theorem.
	
	\begin{theorem}\label[theorem]{thm:WellSeparating}
		
		Let $H$ be a Hilbert space and let $(V_n)_{n\in\natural}\subset\mathcal{G}^k(H)$. Almost every tuple $(x_1,\dots,x_k)\in H^k$ induces a well-separating common complement of $(V_n)_{n\in\natural}$ via $\textnormal{span}(x_1,\dots,x_k)$.
		
	\end{theorem}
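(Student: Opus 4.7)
\medskip

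\noindent\textbf{Plan.} The statement should follow by combining two ingredients that originate in \cite{Noethen.2019b}: a conditional prevalence principle, and an existence result specific to the Hilbert space setting. Concretely, the plan is to argue that if the sequence $(V_n)_{n\in\natural}$ admits at least one well-separating common complement $W_0\in\mathcal{G}_k(H)$, then the set
\[
\mathcal{S}:=\{(x_1,\dots,x_k)\in H^k\ |\ \textnormal{span}(x_1,\dots,x_k)\text{ is a well-separating common complement}\}
\]
is prevalent in $H^k$ in the sense of \cite{OttYorke.2005}. Given prevalence (hence in particular non-emptiness and density of $\mathcal{S}$), it remains only to produce one such $W_0$, and that existence is the purely Hilbert-specific input.

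The first step is thus to invoke the prevalence result from \cite{Noethen.2019b}. Its content, translated into our notation, is that the complement of $\mathcal{S}$ in $H^k$ is shy: one exhibits a finite-dimensional probe space $E\subset H^k$ (for instance, the span of translates of $W_0$) and shows that for Lebesgue-almost every $e\in E$ and every $(x_1,\dots,x_k)\in H^k$, the shifted tuple $(x_1,\dots,x_k)+e$ lies in $\mathcal{S}$. Translating the condition on $\textnormal{span}(x_1,\dots,x_k)$ into a quantitative statement about the smallest singular value of the restriction of suitable projections, the non-membership in $\mathcal{S}$ becomes a countable union of linear/polynomial conditions on $e$, each of Lebesgue measure zero, which collectively still have measure zero.

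The second step, and the main obstacle, is the Hilbert space existence of $W_0$. In $H$, each $V_n$ admits the canonical orthogonal complement $V_n^\perp\in\mathcal{G}_k(H)$, and the orthogonal projection $P_n:H\to V_n^\perp$ satisfies $\textnormal{d}(x,V_n)=\|P_n x\|$. A $k$-dim subspace $W$ is a common complement iff $P_n|_W$ is injective for every $n$, and the well-separating condition becomes $\frac{1}{n}\log\sigma_{\min}(P_n|_W)\to 0$. One approach is to fix an auxiliary $k$-dim reference subspace $U\subset H$ and consider all small perturbations of a fixed orthonormal frame of $U$. Expressing $\sigma_{\min}(P_n|_W)$ via the Gram determinant $\det[\langle P_n x_i,P_n x_j\rangle]$ shows that the set of tuples for which $\sigma_{\min}(P_n|_W)<c^n$ for infinitely many $n$ is contained in a countable union of subvarieties of small measure along generic finite-dimensional probes; a Borel--Cantelli argument along those probes then produces at least one tuple whose span is a well-separating common complement.

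The difficulty is entirely in controlling the \emph{rate}: transversality alone (i.e., $W\cap V_n=\{0\}$ for every $n$) is fairly cheap to arrange and can be obtained by a straightforward diagonal construction, but ruling out exponential decay of $\sigma_{\min}(P_n|_W)$ requires the measure-theoretic input above. Once existence of a single $W_0$ is secured, the theorem follows simply by applying the prevalence principle of step one, since prevalent sets are in particular non-empty and dense, so almost every $k$-tuple indeed spans a well-separating common complement.
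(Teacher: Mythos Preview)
Your plan matches the paper exactly: the paper does not give an independent proof of this theorem but simply records that \cite{Noethen.2019b} establishes the conditional prevalence principle (existence of one well-separating common complement implies that almost every $k$-tuple spans one) and that existence of such a complement is guaranteed in Hilbert spaces. Your two-step outline is precisely this structure, and you have correctly identified both ingredients and their source; the additional detail you sketch about Gram determinants and Borel--Cantelli goes beyond what the paper itself writes out but is consistent with the cited reference.
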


	This theorem plays a crucial role in our convergence proof. In fact, existence of well-separating common complements in Banach spaces would suffice to prove a version of \cref{thm:WellSeparating} for Banach spaces. Hence, we formulate our results in \cref{sec:ComputingCLVs} at the level of Banach spaces while leaving open the question of generality until \cref{subsec:ConvergenceProof}, where we restrict ourselves to Hilbert spaces.

\subsection{Multiplicative ergodic theorem}\label{subsec:MET}

	METs describe asymptotic behavior of linear perturbations of trajectories in dynamical systems in terms of an Oseledets filtration or in terms of an Oseledets splitting. We state the semi-invertible MET from \cite{GonzalezTokmanQuas.2014} to extract basic asymptotic properties that are needed to compute Oseledets spaces. Prior to that, let us recall a few preliminary facts from \cite[section 2.1]{GonzalezTokmanQuas.2014}.\par 
	
	Our choice of MET requires a \textit{strongly measurable random dynamical system} $\mathcal{R}=(\Omega,\mathcal{F},\mathbb{P},\sigma,X,\mathcal{L})$. It consists of a base (flow) and a cocycle describing the tangent linear dynamics. The \textit{base} $\sigma:\Omega\to\Omega$ is a probability-preserving transformation of a Lebesgue space $(\Omega,\mathcal{F},\mathbb{P})$. It is linked to the cocycle via the \textit{generator}, which is a strongly measurable map $\mathcal{L}:\Omega\to L(X)$, i.e., $\mathcal{L}(.)x:\Omega\to X$ is $(\mathcal{F},\mathcal{B}_X)$-measurable for every $x\in X$. Iterative applications of $\mathcal{L}$ along trajectories yield the \textit{cocycle} $\mathcal{L}_{\omega}^{(n)}:=\mathcal{L}(\sigma^{n-1}\omega)\circ\dots\circ\mathcal{L}(\omega)$. Moreover, we call the random dynamical system \textit{separable} if the Banach space $X$ is separable.\par 
	
	Given a bounded linear operator $A\in L(X)$, we define the \emph{index of compactness} of $A$ as
	\begin{multline*}
		\|A\|_{ic(X)}:=\\
		\inf\{r>0\ |\ A(B)\textnormal{ can be covered by finitely many balls of radius }r\}.
	\end{multline*}
	
	\begin{proposition}\label[proposition]{prop:IndexOfCompactness}
		
		Let $\mathcal{R}=(\Omega,\mathcal{F},\mathbb{P},\sigma,X,\mathcal{L})$ be a separable strongly measurable random dynamical system such that $\log^+\|\mathcal{L}(\omega)\|\in L^1(\Omega,\mathcal{F},\mathbb{P})$, where $\log^+(t):=\max(0,\log t)$.\par
		
		For $\mathbb{P}$-a.e. $\omega\in \Omega$, the \emph{maximal Lyapunov exponent}
		\begin{equation}
			\lambda(\omega):=\lim_{n\to\infty}\frac{1}{n}\log\|\mathcal{L}^{(n)}_{\omega}\|
		\end{equation}
		and the \emph{index of compactness}
		\begin{equation}
			\kappa(\omega):=\lim_{n\to\infty}\frac{1}{n}\log\|\mathcal{L}^{(n)}_{\omega}\|_{ic(X)}
		\end{equation}
		exist. Furthermore, $\lambda$ and $\kappa$ are measurable and $\sigma$-invariant.\par
		
		If $\sigma$ is ergodic, then $\lambda$ and $\kappa$ are constant $\mathbb{P}$-almost everywhere. Denote those constants by $\lambda^*$ and $\kappa^*$. It holds $\kappa*\leq\lambda*<\infty$.
		
	\end{proposition}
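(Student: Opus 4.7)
The plan is to apply Kingman's subadditive ergodic theorem to the two cocycles $f_n(\omega):=\log\|\mathcal{L}^{(n)}_{\omega}\|$ and $g_n(\omega):=\log\|\mathcal{L}^{(n)}_{\omega}\|_{ic(X)}$, then read off the remaining conclusions (measurability, $\sigma$-invariance, ergodic constants, and the bounds) from the structure of $\mathcal{L}$.

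First I would establish subadditivity. Using $\mathcal{L}^{(n+m)}_\omega=\mathcal{L}^{(m)}_{\sigma^n\omega}\circ\mathcal{L}^{(n)}_\omega$, the operator norm inequality $\|AB\|\leq\|A\|\|B\|$ immediately gives $f_{n+m}(\omega)\leq f_n(\omega)+f_m(\sigma^n\omega)$. For $g_n$ the key is submultiplicativity $\|AB\|_{ic(X)}\leq\|A\|_{ic(X)}\|B\|_{ic(X)}$: if $B(B_X)\subset\bigcup_{i=1}^N\bar B(b_i,r)$ and $A(B_X)\subset\bigcup_{j=1}^M\bar B(a_j,s)$, then $AB(B_X)\subset\bigcup_{i,j}\bar B(A(b_i)+s\,a_j,\,rs)$ by translating and rescaling the cover of $A(B_X)$, so $\|AB\|_{ic(X)}\leq rs$; taking infima yields submultiplicativity, and hence $g_{n+m}(\omega)\leq g_n(\omega)+g_m(\sigma^n\omega)$.

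Next I would verify the integrability hypothesis. By submultiplicativity, $f_n^+(\omega)\leq\sum_{k=0}^{n-1}\log^+\|\mathcal{L}(\sigma^k\omega)\|$ is in $L^1$ because $\log^+\|\mathcal{L}\|\in L^1$ and $\sigma$ preserves $\mathbb{P}$. Since $\|A\|_{ic(X)}\leq\|A\|$, the same bound controls $g_n^+$. Kingman's theorem then produces $\mathbb{P}$-a.e. limits $\lambda(\omega)=\lim_n f_n(\omega)/n$ and $\kappa(\omega)=\lim_n g_n(\omega)/n$ in $[-\infty,\infty)$, both measurable and $\sigma$-invariant. In the ergodic case these are a.s. constant, yielding $\lambda^*$ and $\kappa^*$. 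The inequality $\kappa^*\leq\lambda^*$ is inherited from $g_n\leq f_n$; finiteness $\lambda^*<\infty$ follows by combining $f_n(\omega)/n\leq(1/n)\sum_{k=0}^{n-1}\log^+\|\mathcal{L}(\sigma^k\omega)\|$ with Birkhoff's pointwise ergodic theorem, giving $\lambda^*\leq\mathbb{E}[\log^+\|\mathcal{L}\|]<\infty$.

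The step I expect to be the main obstacle is verifying that $\omega\mapsto\|\mathcal{L}^{(n)}_\omega\|$ and, more delicately, $\omega\mapsto\|\mathcal{L}^{(n)}_\omega\|_{ic(X)}$ are measurable under only the strong measurability hypothesis on $\mathcal{L}$. For the operator norm, separability of $X$ lets us write $\|\mathcal{L}(\omega)\|=\sup_n\|\mathcal{L}(\omega)x_n\|$ over a dense sequence $(x_n)\subset B_X$, which is measurable. For the index of compactness, one needs a countable description in terms of finite subsets of a dense sequence; this is where separability is genuinely used, and it is the point at which I would invoke the measurability arguments worked out in the setting of \cite{GonzalezTokmanQuas.2014} rather than re-deriving them by hand.
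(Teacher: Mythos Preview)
The paper does not actually supply its own proof of this proposition: it is stated as a preliminary fact recalled from \cite[section 2.1]{GonzalezTokmanQuas.2014}, and the surrounding text makes clear that the author is quoting rather than proving. Your outline via Kingman's subadditive ergodic theorem is the standard route and is essentially how the result is obtained in that reference, so there is nothing to compare against within this paper.

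Your argument is sound. One small slip: in the submultiplicativity computation for $\|\cdot\|_{ic(X)}$, the centers of the covering balls for $AB(B_X)$ should be $A(b_i)+r\,a_j$ rather than $A(b_i)+s\,a_j$ (write $Bx=b_i+ry$ with $\|y\|\leq 1$, then $ABx=Ab_i+rAy$ and cover $Ay$), but the radius $rs$ and hence the conclusion are unaffected. You are also right that the genuinely delicate point is measurability of $\omega\mapsto\|\mathcal{L}^{(n)}_\omega\|_{ic(X)}$ under only strong measurability and separability; deferring this to \cite{GonzalezTokmanQuas.2014} is exactly what the paper itself does.
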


	We call a separable strongly measurable random dynamical system with ergodic base \textit{quasi compact} if $\kappa^*<\lambda^*$. For such systems, \cite{Doan.2010} derived the existence of an Oseledets filtration as a corollary of the two-sided MET by Lian and Lu \cite{LianLu.2010}. If we additionally assume that the base is invertible, \cite{GonzalezTokmanQuas.2014} proves a semi-invertible MET with a splitting that is similar to the Oseledets splitting obtained in fully invertible METs. 
	
	\begin{theorem}\label[theorem]{thm:MET}
		
		Let $\mathcal{R}=(\Omega,\mathcal{F},\mathbb{P},\sigma,X,\mathcal{L})$ be a separable strongly measurable random dynamical system over an ergodic invertible base such that $\log^+\|\mathcal{L}(\omega)\|\in L^1(\Omega,\mathcal{F},\mathbb{P})$. Furthermore, assume that $\mathcal{R}$ is quasi-compact.\par 
		
		There exist $1\leq l\leq\infty$ \emph{exceptional Lyapunov exponents} $\lambda^*=\lambda_1>\dots>\lambda_l>\kappa^*$ (or if $l=\infty$: $\lambda_1>\lambda_2>\dots>\kappa^*$ and $\lim_{n\to\infty}\lambda_n=\kappa^*$), multiplicities $m_1,\dots,m_l\in\natural$, and a unique, measurable splitting of $X$ into closed subspaces
		\begin{equation*}
			X=\bigoplus_{j=1}^lY_j(\omega)\oplus V(\omega)
		\end{equation*}
		defined on a $\sigma$-invariant subset $\Omega'\subset\Omega$ of full $\mathbb{P}$-measure such that the following hold for $\omega\in\Omega'$:
		\begin{enumerate}
			\item the splitting is equivariant, i.e., $\mathcal{L}(\omega)V(\omega)\subset V(\sigma\omega)$ and $\mathcal{L}(\omega)Y_j(\omega)= Y_j(\sigma\omega)$,
			\item $\dim Y_j(\omega)=m_j$,
			\item $\lim_{n\to\infty}(1/n)\log\|\mathcal{L}_{\omega}^{(n)}y\|=\lambda_j$ for $y\in Y_j(\omega)\setminus\{0\}$,
			\item $\limsup_{n\to\infty}(1/n)\log\|\mathcal{L}_{\omega}^{(n)}v\|\leq\kappa^*$ for $v\in V(\omega)$,
			\item the norms of the projections associated to the splitting are tempered with respect to $\sigma$, where a function $f:\Omega\to\real$ is called tempered if $\lim_{n\to\pm\infty}(1/n)\log|f(\sigma^n\omega)|=0$ for $\mathbb{P}$-a.e. $\omega$.
		\end{enumerate}
	
	\end{theorem}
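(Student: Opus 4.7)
The plan is to build on a noninvertible MET (e.g., Doan's corollary of Lian--Lu) and upgrade the resulting Oseledets filtration to a splitting via a pullback-pushforward construction that exploits invertibility of the base $\sigma$. First, I would invoke the noninvertible MET for $\mathcal{R}$ to obtain the exceptional exponents $\lambda_1 > \dots > \lambda_l > \kappa^*$, multiplicities $m_j$, and a measurable equivariant filtration
\[
X = F_0(\omega) \supset F_1(\omega) \supset \dots \supset F_l(\omega) \supset V(\omega)
\]
with $\dim(F_{j-1}(\omega)/F_j(\omega)) = m_j$, characterized by the growth rates $\limsup_n (1/n)\log\|\mathcal{L}^{(n)}_\omega x\|$. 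This already delivers item (2), item (4), and the ``$\leq$'' direction in item (3).

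Second, I would construct each Oseledets space $Y_j(\omega)$ as a limit of pushforwards from the past. Using a measurable selection in the Grassmannian, fix any measurable family $Z_j(\omega) \in \mathcal{G}_{m_j}(X)$ with $F_{j-1}(\omega) = Z_j(\omega) \oplus F_j(\omega)$ and set
\[
Y_j^{(n)}(\omega) := \mathcal{L}^{(n)}_{\sigma^{-n}\omega}\bigl(Z_j(\sigma^{-n}\omega)\bigr) \in \mathcal{G}_{m_j}(X).
\]
I then need to show that $(Y_j^{(n)}(\omega))_n$ is Cauchy in $(\mathcal{G}(X), \textnormal{d}_{\mathcal{G}})$. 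The driving heuristic is a spectral-gap contraction: a vector in $F_{j-1}(\sigma^{-n}\omega)$ decomposes into a $Z_j$-piece growing at rate $\lambda_j$ and an $F_j$-piece growing at most at rate $\lambda_{j+1}$; after transport forward to time $0$, the latter is exponentially dominated, so pushed-forward complements rotate exponentially close to a common limiting $m_j$-plane. \cref{lemma:SymmetryOfCloseness} lifts this pointwise estimate to smallness in $\textnormal{d}_{\mathcal{G}}$, and completeness of $\mathcal{G}(X)$ yields $Y_j(\omega) := \lim_n Y_j^{(n)}(\omega)$.

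Third, I would verify (1)--(5) for the splitting $X = \bigoplus_{j=1}^l Y_j(\omega) \oplus V(\omega)$. Equivariance in (1) follows from the identity $\mathcal{L}(\omega)\, Y_j^{(n+1)}(\omega) = Y_j^{(n)}(\sigma\omega)$ together with continuity of the action of $\mathcal{L}(\omega)$ on $\mathcal{G}_{m_j}(X)$. Since $Y_j(\omega) \subset F_{j-1}(\omega)$ has trivial intersection with $F_j(\omega)$ by construction, the growth rate in (3) is pinched to exactly $\lambda_j$, and dimensions are preserved in the Grassmannian limit, giving (2). Item (5) follows from \cref{lemma:ProjectionStable} combined with subadditive and Birkhoff ergodic arguments applied to $\log^+\|\mathcal{L}(\omega)\|$, which force the associated projection norms to grow subexponentially along orbits.

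The main obstacle will be the convergence step. The decomposition argument above depends on controlling the norms of the complementary projections $\Pi_{Z_j\|F_j}(\sigma^{-n}\omega)$ at the pullback times; these are random quantities with no deterministic bound. The essential input is temperedness of those projection norms, which together with a spectral gap $\lambda_j - \lambda_{j+1} > 0$ yields exponential convergence at any rate strictly below the gap. Arranging this simultaneously for all $j$, producing a single $\sigma$-invariant full-measure set $\Omega'$, and handling the case $l = \infty$ with $\lambda_n \downarrow \kappa^*$ (where the $Y_j$ accumulate against $V(\omega)$) is the technical heart of the argument, and it is here that the quasi-compactness hypothesis $\kappa^* < \lambda^*$ provides the separation that makes the whole construction converge.
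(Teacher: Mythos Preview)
The paper does not prove this theorem; it is quoted verbatim from \cite{GonzalezTokmanQuas.2014} and used as a black box, with only the uniform growth estimates (\cref{eqn:FiltrationUniformEstimateFT}--\cref{eqn:SplittingUniformEstimateBT}) extracted from its proof for later use. So there is no in-paper proof to compare against.

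That said, your sketch lines up with the strategy the paper explicitly attributes to \cite{GonzalezTokmanQuas.2014}: ``pushes forward a set of special complements of the Oseledets filtration from the far past to the present state along trajectories.'' Your construction $Y_j^{(n)}(\omega) = \mathcal{L}^{(n)}_{\sigma^{-n}\omega}\bigl(Z_j(\sigma^{-n}\omega)\bigr)$ and the spectral-gap contraction argument for Cauchy-ness in $\mathcal{G}(X)$ are exactly that idea. Two technical points you gloss over that the cited proof handles more carefully: first, the uniform lower bounds on growth inside the candidate Oseledets spaces (your ``$Z_j$-piece growing at rate $\lambda_j$'') are obtained by reducing to a matrix cocycle via a suitable basis, not directly from the filtration data---the paper alludes to this around \cite[lemma 2.14]{GonzalezTokmanQuas.2014}. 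Second, your justification of item (5) via \cref{lemma:ProjectionStable} plus Birkhoff is too loose; in \cite{GonzalezTokmanQuas.2014} temperedness is derived from the forward and backward uniform estimates (the analogues of \cref{eqn:FiltrationUniformEstimateFT}--\cref{eqn:SplittingUniformEstimateBT}), and the backward-in-time versions require an additional lemma (\cite[lemma 8.2]{FroylandLloydQuas.2010}) to transfer subadditive limits from $\mathcal{L}_\omega^{(n)}$ to $\mathcal{L}_{\sigma^{-n}\omega}^{(n)}$. None of this invalidates your outline, but the convergence and temperedness steps are where the real work lies, and your sketch correctly flags them as the main obstacle.
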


	We call the above splitting \emph{Oseledets splitting} and the spaces $Y_j(\omega)$ \emph{Oseledets spaces}. The \emph{Oseledets filtration} $X=V_1(\omega)\supset\dots\supset V_l(\omega)\supset V_{l+1}(\omega)$ from Doan's theorem can be reconstructed via $V_{l+1}(\omega)=V(\omega)$ and 
	\begin{equation}
		V_k(\omega)=\bigoplus_{j=k}^lY_j(\omega)\oplus V(\omega)
	\end{equation}
	for $1\leq k\leq l$.\par 
	
	In \cref{subsec:Ginelli} we provide a method to compute the first $p\leq l$, $p<\infty$, Oseledets spaces for fixed $\omega$. The method requires cocycle data along the trajectory $\{\sigma^n\omega\ |\ n\in\integer\}$ and basic asymptotic properties that appear in the proofs of the METs from \cite{Doan.2010} and \cite{GonzalezTokmanQuas.2014}. That is, we need uniform upper bounds for asymptotics of the Oseledets filtration and uniform lower bounds for asymptotics of the Oseledets splitting. While bounds for the Oseledets filtration can be recovered from Doan's work \cite{Doan.2010}: 
	\begin{equation}\label[equation]{eqn:FiltrationUniformEstimateFT}
		\lim_{n\to\infty}\frac{1}{n}\log\|\mathcal{L}_{\omega}^{(n)}|_{V_j(\omega)}\|=\lambda_j
	\end{equation}
	for $1\leq j \leq l$ and
	\begin{equation}\label[equation]{eqn:FiltrationUniformEstimateFT2}
		\limsup_{n\to\infty}\frac{1}{n}\log\|\mathcal{L}_{\omega}^{(n)}|_{V(\omega)}\|\leq\kappa^*,
	\end{equation}
	bounds for the Oseledets splitting are due to \cite{GonzalezTokmanQuas.2014}. By choosing a suitable basis, González-Tokman and Quas reduce the cocycle along $Y_j$ to a cocycle of matrices (similar to \cite[lemma 19]{FroylandLloydQuas.2013}) for which uniform estimates are known. They arrive at \cite[lemma 2.14]{GonzalezTokmanQuas.2014} showing that, for every $\epsilon>0$ and for $\mathbb{P}$-a.e. $\omega$, there is a constant $c_j(\omega)>0$ such that  
	\begin{equation*}
		\|\mathcal{L}_\omega^{(n)}y\|\geq c_j(\omega)e^{n(\lambda_j-\epsilon)}
	\end{equation*}
	holds for every $n\geq 0$ and $y\in Y_j(\omega)\cap S$. By applying the same arguments to the sum of Oseledets spaces $Y_1(\omega)\oplus\dots \oplus Y_j(\omega)$, we get uniform lower bounds of growth rates inside sums of Oseledets spaces
	\begin{equation}\label[equation]{eqn:SplittingUniformEstimateFT}
		\liminf_{n\to\infty}\underset{y\in Y_1(\omega)\oplus\dots\oplus Y_j(\omega)\cap S}{\inf}\,\frac{1}{n}\log\|\mathcal{L}_\omega^{(n)}y\|\geq \lambda_j.
	\end{equation}
	\par
	
	In addition to the bounds for $\mathcal{L}_{\omega}^{(n)}$, we need similar bounds for $\mathcal{L}_{\sigma^{-n}\omega}^{(n)}$. Those can be obtained by applying \cite[lemma 8.2]{FroylandLloydQuas.2010} to the sequences of functions $(\log\|\mathcal{L}_\omega^{(n)}|_{V_j(\omega)}\|)_{n\in\natural}$ and $(\log\|\mathcal{L}_\omega^{(n)}|_{V(\omega)}\|)_{n\in\natural}$. We obtain
	\begin{equation}\label[equation]{eqn:FiltrationUniformEstimateBT}
		\lim_{n\to\infty}\frac{1}{n}\log\|\mathcal{L}_{\sigma^{-n}\omega}^{(n)}|_{V_j(\sigma^{-n}\omega)}\|=\lambda_j
	\end{equation}
	for $1\leq j \leq l$ and
	\begin{equation}\label[equation]{eqn:FiltrationUniformEstimateBT2}
		\limsup_{n\to\infty}\frac{1}{n}\log\|\mathcal{L}_{\sigma^{-n}\omega}^{(n)}|_{V(\sigma^{-n}\omega)}\|\leq\kappa^*
	\end{equation}
	for $\mathbb{P}$-a.e. $\omega$. Uniform lower bounds for the Oseledets splitting are again obtained from reduced systems via matrix cocycles (e.g., see proof of \cite[lemma 20]{FroylandLloydQuas.2013}). We have
	\begin{equation}\label[equation]{eqn:SplittingUniformEstimateBT}
		\liminf_{n\to\infty}\underset{y\in Y_1(\sigma^{-n}\omega)\oplus\dots\oplus Y_j(\sigma^{-n}\omega)\cap S}{\inf}\,\frac{1}{n}\log\|\mathcal{L}_{\sigma^{-n}\omega}^{(n)}y\|\geq \lambda_j.
	\end{equation}
	The uniform estimates for $\mathcal{L}_{\omega}^{(n)}$ and $\mathcal{L}_{\sigma^{-n}\omega}^{(n)}$ are then used in \cite{GonzalezTokmanQuas.2014} to prove temperedness of projections from \cref{thm:MET}.\par 
	
	Observe that $\ker\mathcal{L}_{\omega}^{(n)}\subset V(\omega)$ and $\ker\mathcal{L}_{\sigma^{-n}\omega}^{(n)}\subset V(\sigma^{-n}\omega)$ for every $n\in\natural$. Indeed, $\ker\mathcal{L}_{\omega}^{(n)}\subset V(\omega)$ follows from the different growth rates of the Oseledets splitting. Since $\ker\mathcal{L}_{\omega}^{(n)}\subset V(\omega)$ holds on a $\sigma$-invariant subset of $\Omega$, we get $\ker\mathcal{L}_{\sigma^{-n}\omega}^{(n)}\subset V(\sigma^{-n}\omega)$.\par
	
	Besides the uniform estimates, our convergence proof only needs the properties stated in \cref{thm:MET}. We remark that these properties are present in most versions of the MET that derive an Oseledets splitting. Hence, by adjusting the notation, \cref{sec:ComputingCLVs} can be generalized to various MET-scenarios. In particular, it generalizes the convergence proof from \cite{Noethen.2019}, which assumes the invertible two-sided MET found in \cite{Arnold.1998}.

\bigskip
	
	\section{Computing covariant Lyapunov vectors}\label{sec:ComputingCLVs}

	In this section we provide a method to compute \emph{covariant Lyapunov vectors} (CLVs). CLVs are a choice of basis vectors for Oseledets spaces that are normalized and covariant, meaning that CLVs at $\omega$ are mapped to CLVs at $\sigma^n\omega$ by $\mathcal{L}^{(n)}_\omega$ up to normalizing factors. According to the MET those factors have an exponential growth rate given by the associated Lyapunov exponents. Hence, CLVs describe asymptotic behavior of linear perturbations along trajectories. Using covariance and using that $\mathcal{L}$ is invertible on Oseledets spaces, we may push forward and backward CLVs at $\omega$ to obtain CLVs along the whole trajectory. Thus, the goal is to compute normalized basis vectors of Oseledets spaces at $\omega$.\par 
	
	Our method of choice is the Ginelli algorithm described in \cref{subsec:Ginelli}. It can be divided into two steps: one using $\mathcal{L}_{\sigma^{-n}\omega}^{(n)}$ to get an approximation of $Y_1(\omega)\oplus\dots\oplus Y_j(\omega)$ and one using $\mathcal{L}^{(n)}_\omega$ and $(\mathcal{L}^{(n)}_\omega)^{-1}$ (where it is defined) to extract an approximation of $Y_j(\omega)$ from the former approximation. We derive estimates involving forward propagation via $\mathcal{L}_{\sigma^{-n}\omega}^{(n)}$ and $\mathcal{L}^{(n)}_\omega$ in \cref{subsec:ForwardTime} and estimates involving backward propagation via $(\mathcal{L}^{(n)}_\omega)^{-1}$ in \cref{subsec:BackwardTime}. \cref{subsec:ConvergenceProof} combines those estimates to prove convergence of Ginelli's algorithm.

\subsection{Ginelli algorithm}\label{subsec:Ginelli}

	There are various algorithms to compute CLVs (see \cite{GinelliPoggiTurchiChateLiviPoliti.2007,GinelliChateLiviPoliti.2013,WolfeSamelson.2007,KuptsovParlitz.2012} or see \cite{FroylandHuelsMorrissWatson.2013} for a comparison). While they differ in their implementation, from an analytical point of view they rely either on computing a singular value decomposition of the cocycle (or its adjoint) or on pushing forward/backward a set of randomly chosen vectors. The first kind of methods can be analyzed directly using a technique due to Raghunathan \cite{Raghunathan.1979} (or see \cite{Arnold.1998}) that proves Oseledets' MET via a singular value decomposition of the cocycle. The second kind of methods can be analyzed using the different asymptotic growth rates associated to the Oseledets splitting. In particular, the latter may be used for METs on Banach spaces, like \cref{thm:MET}, where we may not have a singular value decomposition. Ginelli's algorithm \cite{GinelliPoggiTurchiChateLiviPoliti.2007} is part of the second class of methods.\par
	
	The fundamental idea behind Ginelli's algorithm is that almost every vector has a non-vanishing projection (subject to the Oseledets splitting) onto the first Oseledets space. Since vectors inside the first Oseledets space have the highest exponential growth rate, almost every vector will align with the first Oseledets space asymptotically in forward-time. Similarly, we expect the linear span of $k=m_1+\dots+m_j$ randomly chosen vectors to align with the fastest expanding $k$-dimensional subspace, the sum of the first $j$ Oseledets spaces, in forward-time. Reversing time, the fastest growing direction inside $Y_1\oplus\dots\oplus Y_j$ is the slowest growing direction in forward-time, i.e., the subspace $Y_j$. Thus, we have a means to compute Oseledets spaces.\par 
	
	At the level of Grassmannians, Ginelli's algorithm starts with a randomly chosen subspace $W\in\mathcal{G}_{m_1+\dots+m_j}(X)$, which is propagated from the far past to the present via $\mathcal{L}_{\sigma^{-n_1}\omega}^{(n_1)}$ to get an approximation of $Y_1(\omega)\oplus\dots\oplus Y_j(\omega)$ for large $n_1$. Then, $\mathcal{L}_{\sigma^{-n_1}\omega}^{(n_1)}W$ is propagated further via $\mathcal{L}_{\omega}^{(n_2)}$ to approximate $Y_1\oplus\dots\oplus Y_j$ in the far future. Next, the algorithm randomly chooses a subspace  $\tilde{W}\in\mathcal{G}_{m_j}(\mathcal{L}_{\sigma^{-n_1}\omega}^{(n_1+n_2)}W)$. This subspace is propagated backward to approximate $Y_j(\omega)$ for large $n_1,n_2$. (see \cref{fig:Ginelli})\par
	
	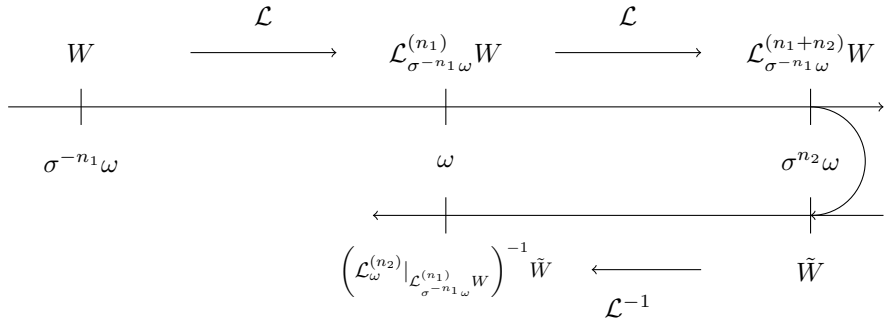
\begin{figure}\label[figure]{fig:Ginelli}
		
		\caption{Ginelli's algorithm at the level of Grassmannians}
		\begin{scaletikzpicturetowidth}{\textwidth*0.95}
			\begin{tikzpicture}[scale=\tikzscale]
			\node[] at (0,5) {};
			\node[] at (0,-5) {};
			
			\draw[->] (-12,1.5) -- (12,1.5);
			\draw[<-] (-2,-1.5) -- (12,-1.5);
			\draw[->] (10,1.5) arc (270:90:-1.5);
			
			\draw (-10,2) -- (-10,1);
			\draw (0,2) -- (0,1);
			\draw (10,2) -- (10,1);
			\draw (0,-2) -- (0,-1);
			\draw (10,-2) -- (10,-1);
			
			\draw[->] (-7,3) -- (-3,3);
			\draw[->] (3,3) -- (7,3);
			\draw[->] (7,-3) -- (4,-3);
			
			\node[] at (-5,4) {$\mathcal{L}$};
			\node[] at (5,4) {$\mathcal{L}$};
			\node[] at (5,-4) {$\mathcal{L}^{-1}$};
			
			\node[] at (-10,0) {$\sigma^{-n_1}\omega$};
			\node[] at (0,0) {$\omega$};
			\node[] at (10,0) {$\sigma^{n_2}\omega$};
			
			\node[] at (-10,3) {$W$};
			\node[] at (0,3) {$\mathcal{L}_{\sigma^{-n_1}\omega}^{(n_1)}W$};
			\node[] at (10,3) {$\mathcal{L}_{\sigma^{-n_1}\omega}^{(n_1+n_2)}W$};
			\node[] at (10,-3) {$\tilde{W}$};
			\node[] at (0,-3) {\scalebox{0.8}{$\left(\mathcal{L}_{\omega}^{(n_2)}|_{\mathcal{L}_{\sigma^{-n_1}\omega}^{(n_1)}W}\right)^{-1}\tilde{W}$}};
			\end{tikzpicture}
		\end{scaletikzpicturetowidth}
	
	\end{figure}

	In practice we express $W$ in terms of a basis $(x_1,\dots,x_k)$. By propagating these vectors, we can track the evolution of $W$. Similarly, we express $\tilde{W}$ in terms of a basis. The corresponding vectors can be described as coefficients of the propagated vectors of $W$. Hence, the backward propagation can be done solely inside a finite-dimensional coefficient space.\par 
	
	Let $X=H$ be a Hilbert space. To avoid that all vectors $x_1,\dots,x_k$ collapse onto the first Oseledets space, which renders them numerically indistinguishable, Ginelli et al. suggest to orthonormalize them between smaller propagation steps. While this procedure does not change the outcome of Ginelli's algorithm analytically, as the involved spaces remain the same, it helps with numerical stability. In particular, they use a $QR$-decomposition to store orthonormalized vectors in a matrix $Q$ and the cocycle on coefficient space in a matrix $R$ for each propagation step. The upper diagonal $R$-matrices can easily be inverted to perform the backward propagation in coefficient space. Using the identification, we substitute initial vectors for the backward propagation by an upper diagonal matrix representing their coefficients. For a more detailed description of the implementation in finite dimensions and examples see \cite{GinelliPoggiTurchiChateLiviPoliti.2007,GinelliChateLiviPoliti.2013}.\par 
	
	Taking the above into account, we define (the analytical kernel\footnote{We leave out numerical details of the implementation from \cite{GinelliPoggiTurchiChateLiviPoliti.2007}, since they do not affect the output of Ginelli's algorithm analytically.} of) Ginelli's algorithm on Hilbert spaces as
	\begin{equation}
		G_{\omega,k}^{n_1,n_2}:H^k\times\real^{k\times k}_{ru}\to H^k,
	\end{equation}
	where $\omega\in\Omega$ defines the trajectory, $k\leq m_1+\dots+m_p<\infty$ is the number of CLVs we wish to compute, $n_1,n_2\in\natural$ is the amount of steps needed along the past and the future of the trajectory, and $\real^{k\times k}_{ru}$ denotes the set of upper diagonal $k\times k$-matrices. $G_{\omega,k}^{n_1,n_2}$ operates on $((x_1,\dots,x_k),(r_{ij})_{i,j=1}^k)$ via the following steps:
	\begin{enumerate}
		\item forward propagation from $\sigma^{-n_1}\omega$ to $\omega$:
		\begin{equation*}
			\left(x_1^1,\dots,x_k^1\right):=\left(\mathcal{L}_{\sigma^{-n_1}\omega}^{(n_1)}x_1,\dots,\mathcal{L}_{\sigma^{-n_1}\omega}^{(n_1)}x_k\right).
		\end{equation*}
		\item forward propagation from $\omega$ to $\sigma^{n_2}\omega$:
		\begin{equation*}
			\left(x_1^2,\dots,x_k^2\right):=\left(\mathcal{L}_{\omega}^{(n_2)}x_1^1,\dots,\mathcal{L}_{\omega}^{(n_2)}x_k^1\right).
		\end{equation*}
		\item orthonormalizing (e.g., via Gram-Schmidt $GS$):
		\begin{equation*}
			\left(x_1^3,\dots,x_k^3\right):=GS\left(x_1^2,\dots,x_k^2\right).
		\end{equation*}
		\item initializing vectors for backward propagation:
		\begin{equation*}
			\left(y_1^1,y_2^1,\dots,y_k^1\right):=\left(r_{11}x_1^3,r_{12}x_1^3+r_{22}x_2^3,\dots,\sum_{i=1}^{k}r_{ik}x_i^3\right).
		\end{equation*}
		\item backward propagation from $\sigma^{n_2}\omega$ to $\omega$:
		\begin{equation*}
			\left(y_1^2,\dots,y_k^2\right):=\left(\left(\mathcal{L}_{\omega}^{(n_2)}|_{W^1}\right)^{-1}y_1^1,\dots,\left(\mathcal{L}_{\omega}^{(n_2)}|_{W^1}\right)^{-1}y_k^1\right),
		\end{equation*}
		where $W^1:=\textnormal{span}\left(x_1^1,\dots,x_k^1\right)$.
		\item normalizing:
		\begin{equation*}
			\left(y_1^3,\dots,y_k^3\right):=\left(\frac{y_1^2}{\|y_1^2\|},\dots,\frac{y_k^2}{\|y_k^2\|}\right).
		\end{equation*}
	\end{enumerate}
	We set $G_{\omega,k}^{n_1,n_2}((x_1,\dots,x_k),(r_{ij})_{i,j=1}^k):=\left(y_1^3,\dots,y_k^3\right)$ as our approximation of the first $k$ CLVs at $\omega$. The steps in computing $G_{\omega,k}^{n_1,n_2}$ are not tailored to the setting of \cref{thm:MET}, but rather can be performed using a sequence of operators describing forward and backward propagation. Therefore, the upcoming convergence theorem may be generalized to various MET-scenarios.\par 
	
	Before formulating the convergence theorem, we remark that whenever $G_{\omega,k+1}^{n_1,n_2}$ $((x_1,\dots,x_{k+1}),(r_{ij})_{i,j=1}^{k+1})$ is well-defined, its first $k$ components coincide with $G_{\omega,k}^{n_1,n_2}((x_1,\dots,x_{k}),(r_{ij})_{i,j=1}^{k})$. Thus, it suffices to investigate the case $k=m_1+\dots+m_p$ for finite $p\leq l$.\par 
	
	We group indices according to the multiplicities of Lyapunov exponents to simplify notation:
	\begin{equation}
		\left(x_{1_1},x_{1_2},\dots,x_{1_{m_1}},x_{2_1},\dots,x_{2_{m_2}},x_{3_1},\dots,x_{p_{m_p}}\right).
	\end{equation}
	
	\begin{theorem}[Convergence a.e. of Ginelli's algorithm]\label[theorem]{thm:Convergence}
		
		Let $\mathcal{R}=(\Omega,\mathcal{F},\mathbb{P},\sigma,H,$ $\mathcal{L})$ satisfy the assumptions of \cref{thm:MET} and let $k=m_1+\dots+m_p$ for some finite $p\leq l$. Moreover, set $\lambda_0:=\infty$ and $\lambda_{l+1}:=\kappa^*$.\par 
		
		On a subset $\Omega'\subset\Omega$ of full $\mathbb{P}$-measure, Ginelli's algorithm converges for almost every input. That is, fixing $\omega\in\Omega'$, for a.e. tuple $(x_1,\dots,x_k)\in H^k$, for a.e. $R\in\real^{k\times k}_{ru}$, and for all $j\leq p$, it holds
		\small
		\begin{multline}\label[equation]{eqn:ConvergenceProof}
			\limsup_{N\to\infty}\underset{n_1,n_2\geq N}{\sup}\,\frac{1}{\min(n_1,n_2)}\log\textnormal{d}_{\mathcal{G}}\left(\textnormal{span}\left\{\left.\left(G_{\omega,k}^{n_1,n_2}\right)_{j_i}\ \right|\ i=1,\dots,m_j\right\},Y_j(\omega)\right)\\
			\leq -\min\left(|\lambda_j-\lambda_{j-1}|,|\lambda_j-\lambda_{j+1}|\right)
		\end{multline} 
		\normalsize
		at $((x_1,\dots,x_{k}),R)$.\footnote{There are three concepts of ``almost every''. Firstly, the algorithm fixes $\omega$ from a set of full $\mathbb{P}$-measure to determine the trajectory along which Ginelli's algorithm shall be executed. Secondly and thirdly, the algorithm requires a tuple $(x_1,\dots,x_k)\in H^k$ and an upper diagonal matrix $R\in\real^{k\times k}_{ru}$ as inputs. ``A.e.'' with respect to the tuple is understood in terms of \textit{prevalence} \cite{OttYorke.2005}, whereas ``a.e.'' with respect to the matrix is meant in the usual Lebesgue sense. If $H$ is finite-dimensional, the two previous notions coincide.}
		
	\end{theorem}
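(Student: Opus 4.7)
The plan is to view Ginelli's algorithm as two successive asymptotic alignment problems, handle each with the tools of \cref{subsec:ForwardTime} and \cref{subsec:BackwardTime}, and compose the resulting estimates. I would begin by restricting to a $\sigma$-invariant subset of full $\mathbb{P}$-measure on which \cref{thm:MET} and the uniform asymptotics collected at the end of \cref{subsec:MET} hold simultaneously for the finite family $\lambda_1,\dots,\lambda_p$. On such a set, \cref{thm:WellSeparating} applied to each sequence $(V_{q+1}(\sigma^{-n}\omega))_{n\in\natural}$, $q=1,\dots,p$, shows that prevalently in $(x_1,\dots,x_k)\in H^k$ the ordered partial spans $W_q:=\textnormal{span}(x_{1_1},\dots,x_{q_{m_q}})$ are well-separating common complements of $(V_{q+1}(\sigma^{-n}\omega))_n$ for every $q\leq p$ simultaneously. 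The genericity condition I would impose on $R$ is that each diagonal block $(r_{j_i,j_{i'}})_{i,i'=1}^{m_j}$ be nonsingular, which excludes only a Lebesgue-null set.

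For the forward phase, observe that Gram--Schmidt preserves the flag generated by an ordered tuple, so $\textnormal{span}(x^3_{1_1},\dots,x^3_{q_{m_q}})=\mathcal{L}^{(n_1+n_2)}_{\sigma^{-n_1}\omega}W_q$ for every $q\leq p$. Combining the well-separation of $W_q$ with the uniform upper bound \cref{eqn:FiltrationUniformEstimateBT} on $V_{q+1}(\sigma^{-n_1}\omega)$ and the uniform lower bound \cref{eqn:SplittingUniformEstimateBT} inside $Y_1(\sigma^{-n_1}\omega)\oplus\cdots\oplus Y_q(\sigma^{-n_1}\omega)$, the tools developed in \cref{subsec:ForwardTime} should yield convergence of $\mathcal{L}^{(n_1)}_{\sigma^{-n_1}\omega}W_q$ to $S_q(\omega):=Y_1(\omega)\oplus\cdots\oplus Y_q(\omega)$ in $\mathcal{G}_{m_1+\cdots+m_q}(H)$ at exponential rate $\lambda_q-\lambda_{q+1}$, modulo subexponential factors. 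The second propagation by $\mathcal{L}^{(n_2)}_\omega$ maps $S_q(\omega)\oplus V_{q+1}(\omega)$ equivariantly onto $S_q(\sigma^{n_2}\omega)\oplus V_{q+1}(\sigma^{n_2}\omega)$; invoking \cref{lemma:ProjectionStable} together with the temperedness of the projections from \cref{thm:MET}, the transported subspace $\mathcal{L}^{(n_1+n_2)}_{\sigma^{-n_1}\omega}W_q$ still approximates $S_q(\sigma^{n_2}\omega)$ at the same exponential rate in $n_1$, with only a tempered correction in $n_2$.

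For the backward phase, I would write $y^1_{j_i}=a+b$ with $a\in\textnormal{span}(x^3_{1_1},\dots,x^3_{(j-1)_{m_{j-1}}})=\mathcal{L}^{(n_1+n_2)}_{\sigma^{-n_1}\omega}W_{j-1}$ and $b\in\textnormal{span}(x^3_{j_1},\dots,x^3_{j_i})$. The forward analysis places $a$ near $S_{j-1}(\sigma^{n_2}\omega)$ and $b$ near a subspace transverse to $S_{j-1}(\sigma^{n_2}\omega)$ inside $S_j(\sigma^{n_2}\omega)$; the nonsingularity of the diagonal $m_j\times m_j$ block of $R$ then keeps the $Y_j(\sigma^{n_2}\omega)$-component of $b$ bounded below uniformly in $n_1,n_2$. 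Feeding $y^1_{j_i}$ into $(\mathcal{L}^{(n_2)}_\omega|_{W^1})^{-1}$ and applying \cref{eqn:FiltrationUniformEstimateFT} and \cref{eqn:SplittingUniformEstimateFT} at $\omega$, the inverse cocycle contracts $Y_j(\omega)$ like $e^{-n_2\lambda_j}$ while contracting $S_{j-1}(\omega)$ by at most $e^{-n_2\lambda_{j-1}}$, so after normalization the span of $y^3_{j_1},\dots,y^3_{j_{m_j}}$ aligns with $Y_j(\omega)$ at exponential rate $\lambda_{j-1}-\lambda_j$, again up to tempered factors.

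Assembling both estimates with a triangle inequality in $\textnormal{d}_\mathcal{G}$, and using \cref{lemma:SymmetryOfCloseness} to reduce to a one-sided bound (valid since we compare subspaces of fixed dimension $m_j$), should yield
\[
  \textnormal{d}_\mathcal{G}\bigl(\textnormal{span}(y^3_{j_1},\dots,y^3_{j_{m_j}}),\,Y_j(\omega)\bigr)\;\leq\;C(\omega,n_1,n_2)\bigl(e^{-n_1(\lambda_j-\lambda_{j+1})}+e^{-n_2(\lambda_{j-1}-\lambda_j)}\bigr)
\]
for some $C(\omega,n_1,n_2)$ tempered in both indices. Taking logarithms and dividing by $\min(n_1,n_2)$ then gives the claimed $\limsup$. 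The main obstacle I anticipate is cleanly coupling the two phases: the forward approximation is evaluated at $\sigma^{n_2}\omega$ after an iteration that can inflate norms by $e^{n_2\lambda_1}$, whereas the backward step requires quantitative transversality between $b$ and the $S_{j-1}(\sigma^{n_2}\omega)$-approximation. Exploiting the scaling invariance of the Grassmannian distance and the temperedness of all projections involved is what should absorb every subexponential factor and leave exactly the two spectral gaps in the final rate.
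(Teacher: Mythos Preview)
Your overall architecture---restrict to a full-measure $\Omega'$, use \cref{thm:WellSeparating} to get a prevalent set of tuples whose partial spans $W_q$ are well-separating common complements of $(V_{q+1}(\sigma^{-n}\omega))_n$, and impose nonsingularity on the diagonal blocks of $R$---matches the paper exactly, as does your handling of the forward phase via \cref{subsec:ForwardTime}.

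The gap is in your backward phase. You write that $(\mathcal{L}^{(n_2)}_\omega|_{W^1})^{-1}$ ``contracts $Y_j(\omega)$ like $e^{-n_2\lambda_j}$ while contracting $S_{j-1}(\omega)$ by at most $e^{-n_2\lambda_{j-1}}$'', but this inverse is only defined on $W^1=\mathcal{L}^{(n_1)}_{\sigma^{-n_1}\omega}W_p$ (respectively on the sub-piece $\mathcal{L}^{(n_1)}_{\sigma^{-n_1}\omega}W_j$), and neither $Y_j(\omega)$ nor $S_{j-1}(\omega)$ is contained in that space. Decomposing $y^1_{j_i}$ at $\sigma^{n_2}\omega$ along the Oseledets splitting does not help either: the $V_{j+1}(\sigma^{n_2}\omega)$-component is in general nonzero and does \emph{not} lie in the range of $\mathcal{L}^{(n_2)}_\omega|_{W^1}$, so you cannot pull back the components individually. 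In a semi-invertible setting there is no global inverse to fall back on, so the heuristic ``invert on Oseledets spaces and perturb'' needs a genuine argument.

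The paper resolves this without ever inverting along the Oseledets splitting. \Cref{lemma:MapEstimateBT} and \cref{cor:MapEstimateBT} work directly with the approximate pair $\mathcal{L}(-n_1)W_{j-1}\subset\mathcal{L}(-n_1)W_j$ as complements of $V_j(\omega)$ and $V_{j+1}(\omega)$: given a lower bound on $\inf_{\tilde w'\in\mathcal{L}(n_2)\tilde W\cap S}\textnormal{d}(\tilde w',\mathcal{L}(n_2)\mathcal{L}(-n_1)W_{j-1})$, they bound $\textnormal{d}(\tilde w,Y_j(\omega))$ \emph{contrapositively} via the forward growth rates \cref{eqn:FiltrationUniformEstimateFT,eqn:SplittingUniformEstimateFT}. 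The required transversality at $\sigma^{n_2}\omega$ is then checked purely on coefficient space, because Gram--Schmidt yields an isometry $\mathcal{L}(n_2)\mathcal{L}(-n_1)W_j\to\real^{m_1+\cdots+m_j}$ under which $\mathcal{L}(n_2)\mathcal{L}(-n_1)W_{j-1}$ becomes a coordinate subspace and $\tilde W(n_1,n_2)$ is encoded by the columns of $R$; nonsingularity of the $j$-th diagonal block gives a $\delta>0$ independent of $n_1,n_2$. This is packaged as \cref{lemma:SequenceEstimateBT}, whose output already contains both error terms (one in $n_1$, one in $n_2$), so no separate triangle-inequality assembly is needed. Your anticipated ``main obstacle'' is precisely this point; the fix is to route the argument through \cref{cor:MapEstimateBT} rather than through contraction rates of an inverse that is not defined on the Oseledets spaces.
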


	\cref{thm:Convergence} tells us that, generically, output vectors of Ginelli's algorithm, after grouping them according to the multiplicities of Lyapunov exponents, span subspaces that are exponentially close to the Oseledets spaces. Hence, the algorithm approximates CLVs. To get a good approximation, it is necessary to increase $n_1$ and $n_2$ simultaneously. In other words, the algorithm needs sufficient data along the past and the future of the trajectory. Moreover, \cref{thm:Convergence} reveals that the speed of convergence to the $j$-th Oseledets space $Y_j(\omega)$ is at least exponentially fast in proportion to the spectral gap between the associated Lyapunov exponent $\lambda_j$ and neighboring exponents.

\subsection{Forward-time estimates}\label{subsec:ForwardTime}

	During the next two subsections we assume that $X$ is a Banach space. Our first result investigates how certain subspaces evolve in the presence of an equivariant splitting under a given map. The estimates consist of terms that are well understood when the splitting is the Oseledets splitting.
	
	\begin{lemma}\label[lemma]{lemma:MapEstimateFT}
		
		Let $(Y,V),(Y',V')\in\textnormal{Comp}_k(X)$ be two pairs of closed complemented subspaces. Assume we have a bounded linear map $\mathcal{L}\in L(X)$ respecting the splittings, i.e., $\mathcal{L}Y\subset Y'$ and $\mathcal{L}V\subset V'$, such that $\ker \mathcal{L}\subset V$.\par
		
		If $W\in\mathcal{G}_k(X)$ is a complement of $V$ such that the degree of transversality satisfies
		\begin{equation}\label[equation]{eqn:MapEstimateFT}
			\underset{w\in W\cap S}{\inf}\,\textnormal{d}(w,V)\geq 2\|\Pi_{V||Y}\|\,\frac{\|\mathcal{L}|_V\|}{\inf_{y\in Y\cap S}\|\mathcal{L}y\|},
		\end{equation} 
		then
		\begin{equation}\label[equation]{eqn:MapEstimateFT2}
			\sup_{w'\in \mathcal{L}W\cap B}\,\textnormal{d}(w',Y'\cap B)\leq 4\,\frac{\|\Pi_{V||Y}\|}{\inf_{w\in W\cap S}\textnormal{d}(w,V)}\,\frac{\|\mathcal{L}|_V\|}{\inf_{y\in Y\cap S}\|\mathcal{L}y\|}.
		\end{equation}
		
	\end{lemma}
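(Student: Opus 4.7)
The plan is to fix $w'\in\mathcal{L}W\cap B$, write $w'=\mathcal{L}w$ with $w\in W$, decompose $w=y+v$ along the splitting $X=Y\oplus V$, exploit that $\mathcal{L}$ respects the splittings to land $\mathcal{L}y\in Y'$ and $\mathcal{L}v\in V'$, and finally approximate $w'$ inside $Y'\cap B$ by (a rescaled copy of) $\mathcal{L}y$. All quantities appearing on the right of \eqref{eqn:MapEstimateFT2} emerge naturally: $\inf_{w\in W\cap S}\textnormal{d}(w,V)$ bounds $\|w\|$ and hence $\|v\|$ in terms of $\|y\|$, $\|\mathcal{L}|_V\|$ controls how $\mathcal{L}$ enlarges $v$, and $\inf_{y\in Y\cap S}\|\mathcal{L}y\|$ guarantees that $\mathcal{L}y$ retains a definite size.

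First I would establish a general projection inequality: for any $u\in X$, using that $\Pi_{Y\|V}(u-\tilde v)=\Pi_{Y\|V}u$ for every $\tilde v\in V$, one shows $\textnormal{d}(u,V)\le\|\Pi_{Y\|V}u\|$. Applied to $u=w/\|w\|\in W\cap S$, this gives $\|y\|\ge\alpha_W\|w\|$ with $\alpha_W:=\inf_{w\in W\cap S}\textnormal{d}(w,V)$, so that $\|v\|\le\|\Pi_{V\|Y}\|\cdot\|w\|\le\|\Pi_{V\|Y}\|\cdot\|y\|/\alpha_W$. Writing $\beta:=\inf_{z\in Y\cap S}\|\mathcal{L}z\|$ (which is positive because $\ker\mathcal{L}\subset V$ and $Y$ is finite-dimensional) and $\gamma:=\|\mathcal{L}|_V\|$, I obtain $\|\mathcal{L}y\|\ge\beta\|y\|$ and $\|\mathcal{L}v\|\le\gamma\|\Pi_{V\|Y}\|\|y\|/\alpha_W$. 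Assumption \eqref{eqn:MapEstimateFT} is precisely the threshold making $\|\mathcal{L}v\|\le\tfrac12\|\mathcal{L}y\|$, whence $\|w'\|=\|\mathcal{L}y+\mathcal{L}v\|\ge\tfrac12\|\mathcal{L}y\|\ge\tfrac12\beta\|y\|$. This yields the crucial size bound $\|y\|\le 2\|w'\|/\beta$.

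The main delicate step is extracting an approximant in $Y'\cap B$. The candidate $\mathcal{L}y\in Y'$ need not lie in $B$, so I split into cases. If $\|\mathcal{L}y\|\le 1$, then $\mathcal{L}y\in Y'\cap B$ and $\|w'-\mathcal{L}y\|=\|\mathcal{L}v\|$. If $\|\mathcal{L}y\|>1$, I use $\mathcal{L}y/\|\mathcal{L}y\|\in Y'\cap S$; since $\|w'\|\le 1$ forces $\|\mathcal{L}y\|-1\le\|\mathcal{L}v\|$, one computes
\begin{equation*}
\left\|w'-\frac{\mathcal{L}y}{\|\mathcal{L}y\|}\right\|\le\bigl|\|\mathcal{L}y\|-1\bigr|+\|\mathcal{L}v\|\le 2\|\mathcal{L}v\|.
\end{equation*}
So in either case $\textnormal{d}(w',Y'\cap B)\le 2\|\mathcal{L}v\|$.

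Finally, chaining the estimates:
\begin{equation*}
\textnormal{d}(w',Y'\cap B)\le 2\|\mathcal{L}v\|\le\frac{2\gamma\|\Pi_{V\|Y}\|\,\|y\|}{\alpha_W}\le\frac{4\gamma\|\Pi_{V\|Y}\|\,\|w'\|}{\alpha_W\beta}\le\frac{4\|\Pi_{V\|Y}\|\,\|\mathcal{L}|_V\|}{\alpha_W\inf_{y\in Y\cap S}\|\mathcal{L}y\|},
\end{equation*}
which is \eqref{eqn:MapEstimateFT2}. The hardest part is the normalization in the case $\|\mathcal{L}y\|>1$; everything else is direct bookkeeping of projection norms. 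The transversality hypothesis plays a double role — it both forces the dominant component $\mathcal{L}y$ to carry most of the size of $w'$ (so that the normalization is cheap) and appears as the denominator in the final bound.
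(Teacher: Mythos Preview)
Your proof is correct and follows essentially the same approach as the paper: decompose $w=y+v$ along $Y\oplus V$, use the transversality hypothesis to force $\|\mathcal{L}v\|\le\tfrac12\|\mathcal{L}y\|$, and approximate by a suitably rescaled $\mathcal{L}y\in Y'$. The only cosmetic difference is that the paper normalizes the input ($w\in W\cap S$, then considers $\mathcal{L}w/\|\mathcal{L}w\|$) and always uses $\mathcal{L}y/\|\mathcal{L}y\|$ as approximant with a case split on $\|\mathcal{L}y\|/\|\mathcal{L}w\|\gtrless 1$, whereas you work directly with $w'\in\mathcal{L}W\cap B$ and split on $\|\mathcal{L}y\|\gtrless 1$; both routes yield the same constant $4$.
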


	\begin{proof}
		
		If $\mathcal{L}|_V=0$, then $\ker \mathcal{L}=V$. Thus, $\mathcal{L}$ restricts to an isomorphism between any complement $W$ of $V$ and $Y'$. In this case the claim is trivially satisfies.\par 
		
		Now, assume $\mathcal{L}|_V\neq 0$. Let $W$ be a complement as in the claim. For $w\in W\cap S$, it holds
		\begin{equation*}
			\|\mathcal{L}\Pi_{V||Y}w\|\leq\|\mathcal{L}|_V\|\,\|\Pi_{V||Y}\|
		\end{equation*}
		and
		\begin{align*}
			\|\mathcal{L}\Pi_{Y||V}w\|&\geq \underset{y\in Y\cap S}{\inf}\,\|\mathcal{L}y\|\,\|\Pi_{Y||V}w\|\\
			&=\underset{y\in Y\cap S}{\inf}\,\|\mathcal{L}y\|\,\|w-\Pi_{V||Y}w\|\\
			&\geq \underset{y\in Y\cap S}{\inf}\,\|\mathcal{L}y\|\,\textnormal{d}(w,V)\\
			&\geq 2\|\Pi_{V||Y}\|\,\|\mathcal{L}|_V\|>0.
		\end{align*}
		Combining both estimates, we get
		\begin{equation}\label[equation]{eqn:MapEstimateFT3}
			\frac{\|\mathcal{L}\Pi_{V||Y}w\|}{\|\mathcal{L}\Pi_{Y||V}w\|}\leq \frac{1}{2}.
		\end{equation}
		To derive \cref{eqn:MapEstimateFT2}, it is enough to estimate $\textnormal{d}(\mathcal{L}w/\|\mathcal{L}w\|,Y'\cap B)$ for $w\in W\cap S$. Write $w=y+v$ according to the decomposition $X=Y\oplus V$. We have
		\begin{align*}
			\textnormal{d}\left(\frac{\mathcal{L}w}{\|\mathcal{L}w\|},Y'\cap B\right)&\leq \left\|\frac{\mathcal{L}w}{\|\mathcal{L}w\|}-\frac{\mathcal{L}y}{\|\mathcal{L}y\|}\right\|\\
			&=\left\|\frac{\mathcal{L}v}{\|\mathcal{L}w\|}-\left(\frac{1}{\|\mathcal{L}y\|}-\frac{1}{\|\mathcal{L}w\|}\right)\mathcal{L}y\right\|\\
			&\leq\frac{\|\mathcal{L}v\|}{\|\mathcal{L}w\|}+\left|1-\frac{\|\mathcal{L}y\|}{\|\mathcal{L}w\|}\right|.
		\end{align*}
		Since $y\neq 0$ and by \cref{eqn:MapEstimateFT3}, we can estimate the first term:
		\begin{equation*}
			\frac{\|\mathcal{L}v\|}{\|\mathcal{L}w\|}\leq \frac{\|\mathcal{L}v\|}{\|\mathcal{L}y\|-\|\mathcal{L}v\|}=\frac{\|\mathcal{L}v\|}{\|\mathcal{L}y\|}\left(1-\frac{\|\mathcal{L}v\|}{\|\mathcal{L}y\|}\right)^{-1}\leq 2\frac{\|\mathcal{L}v\|}{\|\mathcal{L}y\|}.
		\end{equation*}
		For the other term, we distinguish between two cases. If $\|\mathcal{L}y\|/\|\mathcal{L}w\|\leq 1$, then
		\begin{equation*}
			1-\frac{\|\mathcal{L}y\|}{\|\mathcal{L}w\|}\leq 1-\frac{\|\mathcal{L}y\|}{\|\mathcal{L}y\|+\|\mathcal{L}v\|}=\frac{\|\mathcal{L}v\|}{\|\mathcal{L}y\|+\|\mathcal{L}v\|}\leq \frac{\|\mathcal{L}v\|}{\|\mathcal{L}y\|}.
		\end{equation*}
		If $\|\mathcal{L}y\|/\|\mathcal{L}w\|\geq 1$, then
		\begin{equation*}
			\frac{\|\mathcal{L}y\|}{\|\mathcal{L}w\|}-1=\frac{\|\mathcal{L}y\|-\|\mathcal{L}w\|}{\|\mathcal{L}w\|}\leq\frac{\|\mathcal{L}v\|}{\|\mathcal{L}w\|}\leq 2\frac{\|\mathcal{L}v\|}{\|\mathcal{L}y\|}.
		\end{equation*}
		In total, we get
		\begin{equation*}
			\textnormal{d}\left(\frac{\mathcal{L}w}{\|\mathcal{L}w\|},Y'\cap B\right)\leq 4\frac{\|\mathcal{L}v\|}{\|\mathcal{L}y\|}.
		\end{equation*}
		Since $v=\Pi_{V||Y}w$ and $y=\Pi_{Y||V}w$, the claim follows from the estimates in the beginning.
		
	\end{proof}

	\begin{corollary}\label[corollary]{cor:MapEstimateFT}
		
		In the setting of \cref{lemma:MapEstimateFT}, it holds
		\begin{equation}
			\|\Pi_{V'||Y'}|_{\mathcal{L}W}\|\leq 2\,\frac{\|\Pi_{V||Y}\|}{\inf_{w\in W\cap S}\textnormal{d}(w,V)}\,\frac{\|\mathcal{L}|_V\|}{\inf_{y\in Y\cap S}\|\mathcal{L}y\|}.
		\end{equation}
		
	\end{corollary}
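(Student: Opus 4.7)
The plan is to exploit two structural facts already visible in the proof of \cref{lemma:MapEstimateFT} and then quote the quantitative estimates from that proof almost verbatim.

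First, because $\mathcal{L}$ respects the splittings ($\mathcal{L}Y\subset Y'$ and $\mathcal{L}V\subset V'$), the decomposition $w=y+v$ of any $w\in W$ along $X=Y\oplus V$ propagates to a decomposition $\mathcal{L}w=\mathcal{L}y+\mathcal{L}v$ along $X=Y'\oplus V'$. By uniqueness of the splitting, this identifies $\Pi_{V'\|Y'}\mathcal{L}w=\mathcal{L}v=\mathcal{L}\Pi_{V\|Y}w$. Second, $\mathcal{L}|_W$ is injective: any $w\in W$ with $\mathcal{L}w=0$ lies in $\ker\mathcal{L}\cap W\subset V\cap W=\{0\}$. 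Consequently every nonzero element of $\mathcal{L}W$ has the form $\mathcal{L}w/\|\mathcal{L}w\|$ for some $w\in W\cap S$, and
\[
\|\Pi_{V'\|Y'}|_{\mathcal{L}W}\|=\sup_{w\in W\cap S}\frac{\|\mathcal{L}v\|}{\|\mathcal{L}w\|},\qquad v=\Pi_{V\|Y}w.
\]

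Next I would invoke the intermediate bounds derived inside the proof of \cref{lemma:MapEstimateFT}. There it is shown, under the transversality hypothesis \cref{eqn:MapEstimateFT}, that $\|\mathcal{L}v\|/\|\mathcal{L}y\|\le 1/2$, which together with $\|\mathcal{L}w\|\ge\|\mathcal{L}y\|-\|\mathcal{L}v\|$ gives $\|\mathcal{L}v\|/\|\mathcal{L}w\|\le 2\|\mathcal{L}v\|/\|\mathcal{L}y\|$. Plugging in $\|\mathcal{L}v\|\le\|\mathcal{L}|_V\|\,\|\Pi_{V\|Y}\|$ (using $\|w\|=1$) and $\|\mathcal{L}y\|\ge\inf_{y'\in Y\cap S}\|\mathcal{L}y'\|\cdot\textnormal{d}(w,V)\ge\inf_{y'\in Y\cap S}\|\mathcal{L}y'\|\cdot\inf_{w'\in W\cap S}\textnormal{d}(w',V)$ and taking the supremum over $w\in W\cap S$ yields the claimed inequality.

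The degenerate case $\mathcal{L}|_V=0$ should be treated separately, as in the lemma: then $\Pi_{V'\|Y'}\mathcal{L}w=\mathcal{L}v=0$ for every $w\in W$, so $\Pi_{V'\|Y'}|_{\mathcal{L}W}=0$ and the stated bound holds trivially. I do not expect any genuine obstacle here; the corollary is really a bookkeeping consequence of the lemma's proof. The only mildly delicate point is the identification $\Pi_{V'\|Y'}\mathcal{L}=\mathcal{L}\Pi_{V\|Y}$ on $W$, which rests on equivariance of the splitting under $\mathcal{L}$ and on the uniqueness of the decomposition $X=Y'\oplus V'$.
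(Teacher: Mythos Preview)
Your proposal is correct and follows essentially the same route as the paper: both identify $\Pi_{V'\|Y'}\mathcal{L}w=\mathcal{L}\Pi_{V\|Y}w$ via equivariance, rewrite $\|\Pi_{V'\|Y'}|_{\mathcal{L}W}\|$ as $\sup_{w\in W\cap S}\|\mathcal{L}v\|/\|\mathcal{L}w\|$, and then quote the bound $\|\mathcal{L}v\|/\|\mathcal{L}w\|\le 2\|\mathcal{L}v\|/\|\mathcal{L}y\|$ together with the explicit estimates for $\|\mathcal{L}v\|$ and $\|\mathcal{L}y\|$ from the proof of \cref{lemma:MapEstimateFT}. Your added justifications (injectivity of $\mathcal{L}|_W$ and the degenerate case $\mathcal{L}|_V=0$) are consistent with the paper's treatment.
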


	\begin{proof}
		
		The corollary follows from 
		\begin{equation*}
			\|\Pi_{V'||Y'}|_{\mathcal{L}W}\|=\underset{w\in W\cap S}{\sup}\,\left\|\Pi_{V'||Y'}\frac{\mathcal{L}w}{\|\mathcal{L}w\|}\right\|=\underset{w\in W\cap S}{\sup}\,\frac{\|\mathcal{L}\Pi_{V||Y}w\|}{\|\mathcal{L}w\|}
		\end{equation*}
		and the estimate of $\|\mathcal{L}v\|/\|\mathcal{L}w\|$ in the proof of \cref{lemma:MapEstimateFT}.
		
	\end{proof}

	Next, we derive two lemmata that handle sequences of maps acting on equivariant splittings with different asymptotic growth rates. The first lemma is concerned with propagation from present to future states, whereas the second lemma treats propagation from the past to the present.
	
	\begin{lemma}\label[lemma]{lemma:SequenceEstimateFT}
		
		Let $(Y,V)\in\textnormal{Comp}_k(X)$ and $(Y(n),V(n))\in\textnormal{Comp}_k(X)$ for $n\in\natural$. Assume we have bounded linear maps $\mathcal{L}(n)\in L(X)$ respecting the splittings, i.e., $\mathcal{L}(n)Y\subset Y(n)$ and $\mathcal{L}(n)V\subset V(n)$, such that $\ker \mathcal{L}(n)\subset V$. Furthermore, assume there are numbers $\infty>\lambda_Y>\lambda_V\geq-\infty$ such that
		\begin{equation*}
			\limsup_{n\to\infty}\frac{1}{n}\log\|\mathcal{L}(n)|_V\|\leq \lambda_V
		\end{equation*}
		and
		\begin{equation*}
			\liminf_{n\to\infty}\underset{y\in Y\cap S}{\inf}\,\frac{1}{n}\log\|\mathcal{L}(n)y\|\geq \lambda_Y.
		\end{equation*}
		\par
		
		Then, we have
		\begin{equation}
			\limsup_{n\to\infty}\frac{1}{n}\log\textnormal{d}_{\mathcal{G}}\left(\mathcal{L}(n)W,Y(n)\right)\leq -|\lambda_Y-\lambda_V|
		\end{equation}
		for any complement $W$ of $V$.
		
	\end{lemma}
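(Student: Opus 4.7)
The plan is to apply \cref{lemma:MapEstimateFT} to each $\mathcal{L}(n)$ with the fixed source splitting $(Y,V)$ and the target splitting $(Y(n),V(n))$, then upgrade the one-sided bound to a Grassmannian distance via \cref{lemma:SymmetryOfCloseness}, using the exponential gap $\lambda_Y>\lambda_V$ to drive everything to zero at the required rate.

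First I would verify that $\delta:=\inf_{w\in W\cap S}\textnormal{d}(w,V)$ is strictly positive. Since $W\in\mathcal{G}_k(X)$ is finite-dimensional, $W\cap S$ is compact, and since $V$ is closed with $W\cap V=\{0\}$, the continuous function $w\mapsto\textnormal{d}(w,V)$ is positive on $W\cap S$, so its infimum is positive. Next, fix $\epsilon>0$ with $2\epsilon<\lambda_Y-\lambda_V$ (or any $\epsilon>0$ if $\lambda_V=-\infty$). The asymptotic hypotheses then give $\|\mathcal{L}(n)|_V\|\leq e^{n(\lambda_V+\epsilon)}$ and $\inf_{y\in Y\cap S}\|\mathcal{L}(n)y\|\geq e^{n(\lambda_Y-\epsilon)}$ for all sufficiently large $n$, whence
\begin{equation*}
	2\|\Pi_{V||Y}\|\,\frac{\|\mathcal{L}(n)|_V\|}{\inf_{y\in Y\cap S}\|\mathcal{L}(n)y\|}\leq 2\|\Pi_{V||Y}\|\,e^{n(\lambda_V-\lambda_Y+2\epsilon)}\xrightarrow{n\to\infty} 0.
\end{equation*}
So the transversality hypothesis \cref{eqn:MapEstimateFT} is eventually satisfied.

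For those $n$, applying \cref{lemma:MapEstimateFT} together with the same exponential bounds yields
\begin{equation*}
	\sup_{w'\in\mathcal{L}(n)W\cap B}\textnormal{d}(w',Y(n)\cap B)\leq \frac{4\|\Pi_{V||Y}\|}{\delta}\,e^{n(\lambda_V-\lambda_Y+2\epsilon)}.
\end{equation*}
Because $\ker\mathcal{L}(n)\subset V$ and $W\cap V=\{0\}$, the restriction $\mathcal{L}(n)|_W$ is injective, so $\mathcal{L}(n)W\in\mathcal{G}_k(X)$ and we may compare it to $Y(n)\in\mathcal{G}_k(X)$. Eventually the right-hand side above falls below $3^{-k}/4$, and \cref{lemma:SymmetryOfCloseness} then produces
\begin{equation*}
	\textnormal{d}_{\mathcal{G}}\bigl(\mathcal{L}(n)W,Y(n)\bigr)\leq \frac{16\cdot 3^k\,\|\Pi_{V||Y}\|}{\delta}\,e^{n(\lambda_V-\lambda_Y+2\epsilon)}.
\end{equation*}
Taking $(1/n)\log$ and then $\limsup$ eliminates the $n$-independent prefactor and produces a bound of $-(\lambda_Y-\lambda_V)+2\epsilon$; since $\epsilon$ was arbitrary, the claim follows.

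The only real subtlety is confirming that every surviving constant ($\delta$, $\|\Pi_{V||Y}\|$, $k$, $3^k$) is genuinely independent of $n$, so that it is invisible on exponential scales; the case $\lambda_V=-\infty$ is absorbed by reading ``$\lambda_V+\epsilon$'' as an arbitrarily negative real number, which only makes the ratio decay faster.
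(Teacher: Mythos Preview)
Your proof is correct and follows essentially the same approach as the paper: verify that the exponential gap forces the transversality hypothesis \cref{eqn:MapEstimateFT} for large $n$, apply \cref{lemma:MapEstimateFT}, and upgrade the one-sided estimate via \cref{lemma:SymmetryOfCloseness}. You have simply spelled out in more detail (positivity of $\delta$, the explicit $\epsilon$-argument, injectivity of $\mathcal{L}(n)|_W$) what the paper leaves implicit.
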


	\begin{proof}
		
		According to the assumptions we have
		\begin{equation*}
			\limsup_{n\to\infty}\frac{1}{n}\log\frac{\|\mathcal{L}(n)|_V\|}{\inf_{y\in Y\cap S}\|\mathcal{L}(n)y\|}\leq -|\lambda_Y-\lambda_V|<0,
		\end{equation*}
		i.e., the quotient $\|\mathcal{L}(n)|_V\|/(\inf_{y\in Y\cap S}\|\mathcal{L}(n)y\|)$ decays exponentially fast with $n$. Thus, for any complement $W$ of $V$, there is $N>0$ such that \cref{eqn:MapEstimateFT} of \cref{lemma:MapEstimateFT} is satisfied for all $n\geq N$. Applying the lemma, we get
		\begin{equation*}
			\limsup_{n\to\infty}\frac{1}{n}\log \sup_{w'\in \mathcal{L}(n)W\cap B}\,\textnormal{d}(w',Y(n)\cap B)\leq -|\lambda_Y-\lambda_V|.
		\end{equation*}
		The claim follows from \cref{lemma:SymmetryOfCloseness}.
		
	\end{proof}

	\cref{lemma:SequenceEstimateFT} implies that complements of spaces of the Oseledets filtration will align with Oseledets spaces asymptotically (at an exponential speed). Moreover, the lemma tells us that any two complements of $V$ will align asymptotically if they have a uniformly higher growth rate than $V$. Interestingly, we do not need the existence of an Oseledets splitting. In fact, the lemma may be applied to systems with a possibly non-invertible base (e.g, see \cite[theorem 2]{BarreiraSilva.2005} or \cite{Blumenthal.2015}).
	
	\begin{lemma}\label[lemma]{lemma:SequenceEstimateFT2}
		
		Let $(Y,V)\in\textnormal{Comp}_k(X)$ and $(Y(-n),V(-n))\in\textnormal{Comp}_k(X)$ for $n\in\natural$. Assume we have bounded linear maps $\mathcal{L}(-n)\in L(X)$ respecting the splittings, i.e., $\mathcal{L}(-n)Y(-n)\subset Y$ and $\mathcal{L}(-n)V(-n)\subset V$, such that $\ker \mathcal{L}(-n)\subset V(-n)$. Furthermore, assume that
		\begin{equation*}
			\lim_{n\to\infty}\frac{1}{n}\log\|\Pi_{V(-n)||Y(-n)}\|=0
		\end{equation*}
		and that there are numbers $\infty>\lambda_Y>\lambda_V\geq-\infty$ such that
		\begin{equation*}
			\limsup_{n\to\infty}\frac{1}{n}\log\|\mathcal{L}(-n)|_{V(-n)}\|\leq \lambda_V
		\end{equation*}
		and
		\begin{equation*}
			\liminf_{n\to\infty}\underset{y\in Y(-n)\cap S}{\inf}\,\frac{1}{n}\log\|\mathcal{L}(-n)y\|\geq \lambda_Y.
		\end{equation*}
		\par 
		
		Then, we have
		\begin{equation}
			\limsup_{n\to\infty}\frac{1}{n}\log\textnormal{d}_{\mathcal{G}}\left(\mathcal{L}(-n)W,Y\right)\leq -|\lambda_Y-\lambda_V|
		\end{equation}
		for any well-separated common complement $W$ of $(V(-n))_{n\in\natural}$.
		
	\end{lemma}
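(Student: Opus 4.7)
The plan is to mirror the argument of \cref{lemma:SequenceEstimateFT}, but with the roles of source and target reversed. That is, I apply \cref{lemma:MapEstimateFT} with the map $\mathcal{L}(-n)$, with source splitting $(Y(-n),V(-n))$ and target splitting $(Y,V)$, and with the candidate complement $W$ (which by hypothesis is a well-separating common complement of $(V(-n))_{n\in\natural}$). Note first that $\mathcal{L}(-n)W$ is indeed $k$-dimensional: since $W$ is a complement of $V(-n)$ and $\ker\mathcal{L}(-n)\subset V(-n)$, the restriction $\mathcal{L}(-n)|_W$ is injective.

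The heart of the proof is verifying that the transversality hypothesis \cref{eqn:MapEstimateFT} of \cref{lemma:MapEstimateFT} is eventually satisfied. Its right-hand side reads
\begin{equation*}
2\|\Pi_{V(-n)\|Y(-n)}\|\,\frac{\|\mathcal{L}(-n)|_{V(-n)}\|}{\inf_{y\in Y(-n)\cap S}\|\mathcal{L}(-n)y\|}.
\end{equation*}
By the two growth assumptions, the quotient of operator norms decays with exponential rate at most $-|\lambda_Y-\lambda_V|<0$, while the factor $\|\Pi_{V(-n)\|Y(-n)}\|$ grows only subexponentially by assumption. Hence the right-hand side decays exponentially. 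The left-hand side $\inf_{w\in W\cap S}\textnormal{d}(w,V(-n))$, on the other hand, decays only subexponentially because $W$ is well-separating (\cref{def:WellSeparating}). Thus, for all sufficiently large $n$, the left-hand side exceeds the right-hand side, and \cref{lemma:MapEstimateFT} becomes applicable.

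Once the hypothesis holds, \cref{eqn:MapEstimateFT2} gives
\begin{equation*}
\sup_{w'\in\mathcal{L}(-n)W\cap B}\textnormal{d}(w',Y\cap B)\leq 4\,\frac{\|\Pi_{V(-n)\|Y(-n)}\|}{\inf_{w\in W\cap S}\textnormal{d}(w,V(-n))}\,\frac{\|\mathcal{L}(-n)|_{V(-n)}\|}{\inf_{y\in Y(-n)\cap S}\|\mathcal{L}(-n)y\|}.
\end{equation*}
Taking $\limsup (1/n)\log$ of both sides, the two subexponential factors in the first quotient contribute $0$, while the second quotient contributes $-|\lambda_Y-\lambda_V|$. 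Finally, since $\mathcal{L}(-n)W\in\mathcal{G}_k(X)$, I invoke \cref{lemma:SymmetryOfCloseness} to pass from this one-sided supremum to the full Hausdorff distance $\textnormal{d}_{\mathcal{G}}(\mathcal{L}(-n)W,Y)$, picking up only an additional constant factor that disappears under $(1/n)\log$, and obtain the claimed exponential rate $-|\lambda_Y-\lambda_V|$.

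The main conceptual obstacle — and the reason the assumption on $W$ appears in this precise form — is that the transversality quantity $\inf_{w\in W\cap S}\textnormal{d}(w,V(-n))$ is now $n$-dependent and a priori could degenerate quickly. It is precisely the well-separating property that enforces subexponential decay, so that it can be outpaced by the exponentially shrinking expansion-gap term. This is the one asymmetry with the forward-time \cref{lemma:SequenceEstimateFT}, where the complement had to be a complement of a single fixed subspace and no such condition was needed.
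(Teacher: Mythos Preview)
Your proof is correct and follows essentially the same approach as the paper: both verify that the right-hand side of \cref{eqn:MapEstimateFT} decays exponentially (combining the exponential gap with the subexponential projection norm), observe that the well-separating property makes the left-hand side decay only subexponentially so that \cref{lemma:MapEstimateFT} applies for large $n$, and then finish as in \cref{lemma:SequenceEstimateFT} via \cref{lemma:SymmetryOfCloseness}. Your write-up is in fact slightly more explicit than the paper's, which simply refers back to the proof of \cref{lemma:SequenceEstimateFT} for the concluding steps.
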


	\begin{proof}
		
		As in \cref{lemma:SequenceEstimateFT}, we see that 
		\begin{equation*}
			\limsup_{n\to\infty}\frac{1}{n}\log\frac{\|\mathcal{L}(-n)|_{V(-n)}\|}{\inf_{y\in Y(-n)\cap S}\|\mathcal{L}(-n)y\|}\leq -|\lambda_Y-\lambda_V|.
		\end{equation*}
		By our assumption on the growth rates of the associated projections, we get
		\begin{multline*}
			\limsup_{n\to\infty}\frac{1}{n}\log \left(2\|\Pi_{V(-n)||Y(-n)}\|\,\frac{\|\mathcal{L}(-n)|_{V(-n)}\|}{\inf_{y\in Y(-n)\cap S}\|\mathcal{L}(-n)y\|}\right)\\
			\leq -|\lambda_Y-\lambda_V|<0.
		\end{multline*}
		In particular, by \cref{def:WellSeparating} any well-separated common complement of $(V(-n))_{n\in\natural}$ fulfills \cref{eqn:MapEstimateFT} for $n$ large enough. The claim may be derived as in the proof of \cref{lemma:SequenceEstimateFT}.
		
	\end{proof}

	\begin{corollary}\label[corollary]{cor:SequenceEstimateFT}
		
		In the setting of \cref{lemma:SequenceEstimateFT2}, we have
		\begin{equation}
			\limsup_{n\to\infty}\frac{1}{n}\log\|\Pi_{V||Y}|_{\mathcal{L}(-n)W}\|\leq -|\lambda_Y-\lambda_V|
		\end{equation}
		for any well-separated common complement $W$ of $(V(-n))_{n\in\natural}$.
		
	\end{corollary}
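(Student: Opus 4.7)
The proof plan is to apply the single-step projection bound of \cref{cor:MapEstimateFT} at each time $n$, with the maps $\mathcal{L}(-n)$ taking the splitting $(Y(-n),V(-n))$ to $(Y,V)$, and then to take $\frac{1}{n}\log$ and $\limsup$ to convert the right-hand side into asymptotic exponents.

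First I would invoke \cref{cor:MapEstimateFT} in the situation of \cref{lemma:SequenceEstimateFT2}. The role of $(Y,V)$ in the corollary is played by $(Y(-n),V(-n))$, the role of $(Y',V')$ is played by $(Y,V)$, and the role of $\mathcal{L}$ is played by $\mathcal{L}(-n)$. The subspace $W$ is, by assumption, a common complement of $(V(-n))_{n\in\natural}$, hence a complement of each $V(-n)$. The transversality hypothesis \cref{eqn:MapEstimateFT} must still be verified, but this is exactly the computation already performed in the proof of \cref{lemma:SequenceEstimateFT2}: combining well-separation of $W$, the growth of $\|\Pi_{V(-n)||Y(-n)}\|$, the upper bound on $\|\mathcal{L}(-n)|_{V(-n)}\|$, and the lower bound on $\inf_{y\in Y(-n)\cap S}\|\mathcal{L}(-n)y\|$ shows that the quantity on the right of \cref{eqn:MapEstimateFT} decays exponentially while $\inf_{w\in W\cap S}\textnormal{d}(w,V(-n))$ decays at most subexponentially; hence \cref{eqn:MapEstimateFT} holds for all sufficiently large $n$.

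Once \cref{cor:MapEstimateFT} applies, it yields
\begin{equation*}
\|\Pi_{V||Y}|_{\mathcal{L}(-n)W}\|\leq 2\,\frac{\|\Pi_{V(-n)||Y(-n)}\|}{\inf_{w\in W\cap S}\textnormal{d}(w,V(-n))}\,\frac{\|\mathcal{L}(-n)|_{V(-n)}\|}{\inf_{y\in Y(-n)\cap S}\|\mathcal{L}(-n)y\|}
\end{equation*}
for all $n$ large enough. I would then take $\tfrac{1}{n}\log$ of both sides and pass to $\limsup_{n\to\infty}$. The constant factor $2$ contributes $0$, and the four $n$-dependent factors contribute, respectively, $0$ (tempered projection growth), $0$ (well-separation of $W$), at most $\lambda_V$ (upper bound on $\|\mathcal{L}(-n)|_{V(-n)}\|$), and at least $\lambda_Y$ in the denominator (lower bound on $\inf_y\|\mathcal{L}(-n)y\|$). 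Summing these gives $\limsup_{n\to\infty}\tfrac{1}{n}\log\|\Pi_{V||Y}|_{\mathcal{L}(-n)W}\|\leq \lambda_V-\lambda_Y=-|\lambda_Y-\lambda_V|$, which is exactly the claimed bound.

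There is essentially no real obstacle here: the corollary is a straight quantitative consequence of \cref{cor:MapEstimateFT}, with the main work (verifying the degree-of-transversality hypothesis and computing the $\limsup$ of the numerator-over-denominator) already packaged in \cref{lemma:SequenceEstimateFT2}. The only point requiring a moment of care is that $\inf_{w\in W\cap S}\textnormal{d}(w,V(-n))$ could potentially vanish for small $n$; restricting to $n\geq N$ for some large $N$ (where \cref{eqn:MapEstimateFT} is valid, in particular the infimum is positive) avoids this without affecting the $\limsup$.
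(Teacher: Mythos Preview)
Your proposal is correct and follows exactly the paper's approach: the paper's proof simply notes that \cref{lemma:MapEstimateFT} and \cref{cor:MapEstimateFT} give the same estimate up to a factor of $2$, so the argument of \cref{lemma:SequenceEstimateFT2} carries over verbatim with \cref{cor:MapEstimateFT} in place of \cref{lemma:MapEstimateFT}. Your write-up spells out precisely this substitution and the ensuing $\limsup$ computation; the only superfluous remark is your concern about $\inf_{w\in W\cap S}\textnormal{d}(w,V(-n))$ vanishing for small $n$, which cannot happen since $W$ is by assumption a common complement of every $V(-n)$.
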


	\begin{proof}
		
		Since \cref{lemma:MapEstimateFT} and \cref{cor:MapEstimateFT} give the same estimate up to a factor of $2$, the proof of \cref{cor:SequenceEstimateFT} is the same as for \cref{lemma:SequenceEstimateFT2}.
		
	\end{proof}

	\begin{remark}
		
		With additional assumptions on growth rates, the requirement on $\Pi_{V(-n)||Y(-n)}$ in \cref{lemma:SequenceEstimateFT2} may be derived from growth rates as in the proof of \cref{thm:MET} in \cite{GonzalezTokmanQuas.2014}.
		
	\end{remark}

	The following theorem gives us convergence of certain subspaces in Banach spaces to the sum of the first Oseledets spaces in forward-time.
	
	\begin{theorem}\label[theorem]{thm:ConvergenceBSFT}
		
		Let $\mathcal{R}$ be as in \cref{thm:MET} and $\omega\in\Omega$ such that the Oseledets splitting exists. Write $\lambda_{l+1}:=\kappa^*$ and fix some finite $j\leq l$.\par 
		
		If \cref{eqn:FiltrationUniformEstimateFT,eqn:FiltrationUniformEstimateFT2,eqn:SplittingUniformEstimateFT} hold\footnote{We remark that \cref{eqn:FiltrationUniformEstimateFT,eqn:FiltrationUniformEstimateFT2,eqn:SplittingUniformEstimateFT} and \cref{eqn:FiltrationUniformEstimateBT,eqn:FiltrationUniformEstimateBT2,eqn:SplittingUniformEstimateBT} hold for $\mathbb{P}$-a.e. $\omega\in\Omega$.}, then 
		\begin{equation}
			\limsup_{n\to\infty}\frac{1}{n}\log\textnormal{d}_{\mathcal{G}}\left(\mathcal{L}_{\omega}^{(n)}W,Y_1(\sigma^n\omega)\oplus\dots\oplus Y_j(\sigma^n\omega)\right)\leq -|\lambda_j-\lambda_{j+1}|
		\end{equation}
		for any complement $W$ of $V_{j+1}(\omega)$.\par 
		
		If \cref{eqn:FiltrationUniformEstimateBT,eqn:FiltrationUniformEstimateBT2,eqn:SplittingUniformEstimateBT} hold, then 
		\begin{equation}
			\limsup_{n\to\infty}\frac{1}{n}\log\textnormal{d}_{\mathcal{G}}\left(\mathcal{L}_{\sigma^{-n}\omega}^{(n)}W,Y_1(\omega)\oplus\dots\oplus Y_j(\omega)\right)\leq -|\lambda_j-\lambda_{j+1}|
		\end{equation}
		for any well-separating common complement $W$ of $(V_{j+1}(\sigma^{-n}\omega))_{n\in\natural}$.
		
	\end{theorem}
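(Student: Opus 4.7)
The plan is to deduce both statements as direct applications of \cref{lemma:SequenceEstimateFT} and \cref{lemma:SequenceEstimateFT2}, so the work is almost entirely verifying that the hypotheses of those lemmas translate cleanly to the Oseledets setting. Throughout, set $Y:=Y_1(\omega)\oplus\dots\oplus Y_j(\omega)$ and $V:=V_{j+1}(\omega)$, so that $(Y,V)\in\textnormal{Comp}_k(X)$ by \cref{thm:MET}. Analogously define $Y(n),V(n)$ over $\sigma^n\omega$ and $Y(-n),V(-n)$ over $\sigma^{-n}\omega$.

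For the forward statement I would take $\mathcal{L}(n):=\mathcal{L}_\omega^{(n)}$. Equivariance from \cref{thm:MET}(i) gives $\mathcal{L}(n)Y\subset Y(n)$ and $\mathcal{L}(n)V\subset V(n)$, while $\ker\mathcal{L}_\omega^{(n)}\subset V(\omega)\subset V$ was noted before \cref{thm:MET}. For the two growth rates, \cref{eqn:FiltrationUniformEstimateFT} (or \cref{eqn:FiltrationUniformEstimateFT2} in the edge case $j=l$) applied to $V_{j+1}(\omega)$ yields
\begin{equation*}
\limsup_{n\to\infty}\frac{1}{n}\log\|\mathcal{L}(n)|_V\|\leq \lambda_{j+1},
\end{equation*}
and \cref{eqn:SplittingUniformEstimateFT} yields
\begin{equation*}
\liminf_{n\to\infty}\underset{y\in Y\cap S}{\inf}\,\frac{1}{n}\log\|\mathcal{L}(n)y\|\geq \lambda_j.
\end{equation*}
Since $\lambda_j>\lambda_{j+1}$, \cref{lemma:SequenceEstimateFT} (with $\lambda_Y:=\lambda_j$, $\lambda_V:=\lambda_{j+1}$) applies to any complement $W$ of $V$ and produces exactly the claimed bound $-|\lambda_j-\lambda_{j+1}|$.

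For the backward statement I would take $\mathcal{L}(-n):=\mathcal{L}_{\sigma^{-n}\omega}^{(n)}$ and verify the stronger hypotheses of \cref{lemma:SequenceEstimateFT2}. Equivariance again gives $\mathcal{L}(-n)Y(-n)\subset Y$ and $\mathcal{L}(-n)V(-n)\subset V$, and the kernel inclusion was recorded earlier. The two exponential bounds follow from \cref{eqn:FiltrationUniformEstimateBT}, \cref{eqn:FiltrationUniformEstimateBT2}, and \cref{eqn:SplittingUniformEstimateBT}, in direct analogy to the forward case. The extra input compared to the forward direction is
\begin{equation*}
\lim_{n\to\infty}\frac{1}{n}\log\|\Pi_{V(-n)||Y(-n)}\|=0,
\end{equation*}
which is precisely the temperedness statement \cref{thm:MET}(v) applied to the projection associated to the splitting $X=Y\oplus V$, evaluated along the orbit $\sigma^{-n}\omega$. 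Given these, \cref{lemma:SequenceEstimateFT2} applied to any well-separating common complement $W$ of $(V_{j+1}(\sigma^{-n}\omega))_{n\in\natural}$ yields the bound.

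The main obstacle, such as there is one, is the temperedness input in the second part. One has to confirm that \cref{thm:MET}(v) supplies temperedness not just for the projections onto individual Oseledets spaces but for the projection $\Pi_{V_{j+1}||Y_1\oplus\dots\oplus Y_j}$; this is immediate since the latter is the sum of projections onto $V(\omega)$ and $Y_{j+1}(\omega),\dots,Y_l(\omega)$, and a finite sum of tempered functions is tempered. Everything else is a bookkeeping exercise matching the notation of the lemmas to that of the MET.
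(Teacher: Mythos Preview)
Your proposal is correct and follows exactly the paper's approach: a direct application of \cref{lemma:SequenceEstimateFT} and \cref{lemma:SequenceEstimateFT2} with the identifications $(Y,V)=(Y_1(\omega)\oplus\dots\oplus Y_j(\omega),V_{j+1}(\omega))$, $\mathcal{L}(n)=\mathcal{L}_\omega^{(n)}$, $\mathcal{L}(-n)=\mathcal{L}_{\sigma^{-n}\omega}^{(n)}$; the paper's own proof is in fact just this one sentence, and you have simply spelled out the hypothesis checks it leaves implicit. One small wrinkle: your temperedness argument writes $\Pi_{V_{j+1}||Y_1\oplus\dots\oplus Y_j}$ as the sum of projections onto $V(\omega)$ and $Y_{j+1}(\omega),\dots,Y_l(\omega)$, which is an infinite sum when $l=\infty$; it is cleaner to write it as $I-\sum_{i=1}^j\Pi_{Y_i(\omega)||\dots}$, a finite sum of tempered projections plus the identity, from which temperedness follows at once.
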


	\begin{proof}
		
		The proof is a direct application of \cref{lemma:SequenceEstimateFT} and \cref{lemma:SequenceEstimateFT2} to the splittings $(Y,V)=(Y_1(\omega)\oplus\dots\oplus Y_j(\omega),V_{j+1}(\omega))$, $(Y(n),V(n)):=(Y_1(\sigma^n\omega)\oplus\dots\oplus Y_j(\sigma^n\omega),V_{j+1}(\sigma^n\omega))$ for $n\in\integer$, and to the maps $\mathcal{L}(n):=\mathcal{L}_{\omega}^{(n)}$ and $\mathcal{L}(-n):=\mathcal{L}_{\sigma^{-n}\omega}^{(n)}$ for $n\in\natural$.
		
	\end{proof}

	In view of \cref{thm:WellSeparating}, \cref{thm:ConvergenceBSFT} for Hilbert spaces implies that we can compute the sum of the first Oseledets spaces $Y_1\oplus\dots\oplus Y_j$ at $\omega$ or asymptotically by pushing forward a set of $m_1+\dots+m_j$ randomly chosen vectors. The convergence is exponentially fast with a rate given by the gap between the consecutive Lyapunov exponents $\lambda_j$ and $\lambda_{j+1}$.

\subsection{Backward-time estimates}\label{subsec:BackwardTime}

	In this subsection we investigate backward propagation of certain subspaces. Since we did not assume an invertible cocycle, we cannot simply apply our results from \cref{subsec:ForwardTime} to an inverted system, as it is done in \cite{Noethen.2019}. Instead, we use growth rates in forward-time to deduce properties for backward propagation along sequences of subspaces obtained in \cref{thm:ConvergenceBSFT}.
	
	\begin{lemma}\label[lemma]{lemma:MapEstimateBT}
		
		Let $(Y_1,V_1)\in \textnormal{Comp}_{k_1}(X)$ and $(Y_2,V_2)\in\textnormal{Comp}_{k_2}(V_1)$, so that $X=Y_1\oplus V_1$ and $V_1=Y_2\oplus V_2$. Moreover, let $W_i$ be a complement of $V_i$ in $X$ for $i=1,2$ such that $W_1\subset W_2$. Assume we have a map $\mathcal{L}\in L(X)$ with $\ker\mathcal{L}\subset V_2$.\par
		 
		If $\tilde{W}\in\mathcal{G}_{k_2}(W_2)$ is a complement of $W_1$ in $W_2$ and if $\tilde{w}\in\tilde{W}\cap S$ such that
		\begin{multline}\label[equation]{eqn:MapEstimateBT}
			\textnormal{d}(\tilde{w},Y_2)\geq\\
			(2\|\Pi_{V_1||W_1}\|+\|\Pi_{V_1||Y_1}\|\,\|\Pi_{W_1||V_1}\|)\frac{\|\mathcal{L}|_{V_1}\|}{\inf_{y\in Y_1\cap S}\|\mathcal{L}y\|}+\|\Pi_{V_2||Y_1\oplus Y_2}|_{W_2}\|,
		\end{multline}
		then
		\begin{multline}\label[equation]{eqn:MapEstimateBT2}
			\textnormal{d}\left(\frac{\mathcal{L}\tilde{w}}{\|\mathcal{L}\tilde{w}\|},\mathcal{L}W_1\right)\leq\\
			\frac{2\|\mathcal{L}|_{V_1}\|\,\|\Pi_{V_1||W_1}\|}{\left(\inf_{y\in Y_1\cap S}\|\mathcal{L}y\|\right)\left(\textnormal{d}(\tilde{w},Y_2)-\|\Pi_{V_2||Y_1\oplus Y_2}|_{W_2}\|\right)-\|\mathcal{L}|_{V_1}\|\,\|\Pi_{V_1||Y_1}\|\,\|\Pi_{W_1||V_1}\|}.
		\end{multline}
		
	\end{lemma}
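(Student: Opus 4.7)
The plan is to bound $\textnormal{d}(\mathcal{L}\tilde{w}/\|\mathcal{L}\tilde{w}\|,\mathcal{L}W_1)$ by a quotient whose numerator controls the component of $\tilde{w}$ lying in $V_1$ and whose denominator lower-bounds $\|\mathcal{L}\tilde{w}\|$; to get the latter I will juggle three different decompositions of $\tilde{w}$ simultaneously.

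First, using $X=W_1\oplus V_1$, write $\tilde{w}=w_1+v_1$ with $w_1=\Pi_{W_1||V_1}\tilde{w}$ and $v_1=\Pi_{V_1||W_1}\tilde{w}$. Since $\mathcal{L}w_1\in\mathcal{L}W_1$, I immediately get
\[
\textnormal{d}\left(\frac{\mathcal{L}\tilde{w}}{\|\mathcal{L}\tilde{w}\|},\mathcal{L}W_1\right)\le\frac{\|\mathcal{L}v_1\|}{\|\mathcal{L}\tilde{w}\|}\le\frac{\|\mathcal{L}|_{V_1}\|\,\|\Pi_{V_1||W_1}\|}{\|\mathcal{L}\tilde{w}\|},
\]
so the remaining task is to lower-bound $\|\mathcal{L}\tilde{w}\|$.

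For this I use a second decomposition $X=Y_1\oplus V_1$ to write $\tilde{w}=y_1+v_{11}$ and estimate $\|\mathcal{L}\tilde{w}\|\ge(\inf_{y\in Y_1\cap S}\|\mathcal{L}y\|)\,\|y_1\|-\|\mathcal{L}|_{V_1}\|\,\|v_{11}\|$. Substituting $\tilde{w}=w_1+v_1$ into $\Pi_{V_1||Y_1}$ gives $v_{11}=\Pi_{V_1||Y_1}w_1+v_1$, hence $\|v_{11}\|\le\|\Pi_{V_1||Y_1}\|\,\|\Pi_{W_1||V_1}\|+\|\Pi_{V_1||W_1}\|$. For $\|y_1\|$ I use the finer splitting $X=Y_1\oplus Y_2\oplus V_2$ coming from $V_1=Y_2\oplus V_2$, write $\tilde{w}=y_1+y_2+v_2$, and observe that $y_2\in Y_2$ is a feasible candidate for the distance to $Y_2$, so $\textnormal{d}(\tilde{w},Y_2)\le\|\tilde{w}-y_2\|=\|y_1+v_2\|\le\|y_1\|+\|v_2\|$. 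Since $\tilde{w}\in W_2\cap S$, this yields $\|y_1\|\ge\textnormal{d}(\tilde{w},Y_2)-\|\Pi_{V_2||Y_1\oplus Y_2}|_{W_2}\|$.

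Collecting terms, $\|\mathcal{L}\tilde{w}\|\ge A-B-C$ with $A:=(\inf_{y\in Y_1\cap S}\|\mathcal{L}y\|)(\textnormal{d}(\tilde{w},Y_2)-\|\Pi_{V_2||Y_1\oplus Y_2}|_{W_2}\|)$, $B:=\|\mathcal{L}|_{V_1}\|\,\|\Pi_{V_1||Y_1}\|\,\|\Pi_{W_1||V_1}\|$ and $C:=\|\mathcal{L}|_{V_1}\|\,\|\Pi_{V_1||W_1}\|$. The hypothesis \eqref{eqn:MapEstimateBT} rearranges exactly to $A-B\ge 2C$, so $A-B-C\ge(A-B)/2>0$ and $\|\mathcal{L}v_1\|/\|\mathcal{L}\tilde{w}\|\le C/(A-B-C)\le 2C/(A-B)$, which is precisely \eqref{eqn:MapEstimateBT2}. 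The main obstacle is bookkeeping: the three decompositions of $\tilde{w}$ must be tracked simultaneously, and the factor $2$ in the numerator appears only because the hypothesis is calibrated so that the extra $C$ subtracted from the denominator can be absorbed.
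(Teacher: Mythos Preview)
Your proof is correct and follows essentially the same route as the paper's: both decompose $\tilde w$ via $X=W_1\oplus V_1$ to get the numerator bound $\|\mathcal{L}v_1\|\le\|\mathcal{L}|_{V_1}\|\,\|\Pi_{V_1||W_1}\|$, then use the splittings $X=Y_1\oplus V_1$ and $X=Y_1\oplus Y_2\oplus V_2$ to lower-bound the denominator, arriving at the same quantity $A-B-C$ and the same factor-of-$2$ absorption via the hypothesis. The only cosmetic difference is that the paper first lower-bounds $\|\mathcal{L}\Pi_{W_1||V_1}\tilde w\|$ and then subtracts $\|\mathcal{L}\Pi_{V_1||W_1}\tilde w\|$, whereas you lower-bound $\|\mathcal{L}\tilde w\|$ directly from $\tilde w=y_1+v_{11}$; since $\Pi_{Y_1||V_1}\Pi_{W_1||V_1}\tilde w=\Pi_{Y_1||V_1}\tilde w$, the two computations are literally identical term-by-term.
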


	\begin{proof}
		
		Since $V_1=Y_2\oplus V_2$ is a splitting with $Y_2\neq\{0\}$ and $\ker\mathcal{L}\subset V_2$, it holds $\mathcal{L}|_{V_1}\neq 0$.\par 
		
		Let $\tilde{w}\in\tilde{W}\cap S$ be as in the claim, so that \cref{eqn:MapEstimateBT} is satisfied. We estimate
		\begin{equation*}
			\|\mathcal{L}\Pi_{V_1||W_1}\tilde{w}\|\leq\|\mathcal{L}|_{V_1}\|\,\|\Pi_{V_1||W_1}\|
		\end{equation*}
		and
		\begin{align*}
			\|\mathcal{L}\Pi_{W_1||V_1}\tilde{w}\|&=\|\mathcal{L}(\Pi_{Y_1||V_1}+\Pi_{V_1||Y_1})\Pi_{W_1||V_1}\tilde{w}\|\\
			&\geq \|\mathcal{L}\Pi_{Y_1||V_1}\Pi_{W_1||V_1}\tilde{w}\|-\|\mathcal{L}\Pi_{V_1||Y_1}\Pi_{W_1||V_1}\tilde{w}\|\\
			&\geq \left(\underset{y\in Y_1\cap S}{\inf}\,\|\mathcal{L}y\|\right)\|\Pi_{Y_1||V_1}\Pi_{W_1||V_1}\tilde{w}\|-\|\mathcal{L}|_{V_1}\|\,\|\Pi_{V_1||Y_1}\|\,\|\Pi_{W_1||V_1}\|.
		\end{align*}
		The term with two consecutive projections applied to $\tilde{w}$ can be estimates further via
		\begin{align*}
			\|\Pi_{Y_1||V_1}\Pi_{W_1||V_1}\tilde{w}\|&=\|\Pi_{W_1||V_1}\tilde{w}-\Pi_{V_1||Y_1}\Pi_{W_1||V_1}\tilde{w}\|\\
			&=\|\tilde{w}-\Pi_{V_1||W_1}\tilde{w}-\Pi_{V_1||Y_1}\Pi_{W_1||V_1}\tilde{w}\|\\
			&=\|\tilde{w}-\Pi_{V_1||Y_1}(\Pi_{V_1||W_1}+\Pi_{W_1||V_1})\tilde{w}\|\\
			&=\|\tilde{w}-\Pi_{V_1||Y_1}\tilde{w}\|\\
			&=\|\tilde{w}-\Pi_{Y_2||V_2}\Pi_{V_1||Y_1}\tilde{w}-\Pi_{V_2||Y_2}\Pi_{V_1||Y_1}\tilde{w}\|\\
			&\geq\|\tilde{w}-\Pi_{Y_2||V_2}\Pi_{V_1||Y_1}\tilde{w}\|-\|\Pi_{V_2||Y_2}\Pi_{V_1||Y_1}\tilde{w}\|\\
			&\geq\textnormal{d}(\tilde{w},Y_2)-\|\Pi_{V_2||Y_2}\Pi_{V_1||Y_1}\tilde{w}\|\\
			&=\textnormal{d}(\tilde{w},Y_2)-\|\Pi_{V_2||Y_1\oplus Y_2}\tilde{w}\|\\
			&\geq\textnormal{d}(\tilde{w},Y_2)-\|\Pi_{V_2||Y_1\oplus Y_2}|_{W_2}\|.
		\end{align*}
		Note that $\Pi_{V_2||Y_2}$ and $\Pi_{Y_2||V_2}$ are projections defined on $V_1$. By \cref{eqn:MapEstimateBT} we have
		\begin{align*}
			\|\Pi_{Y_1||V_1}\Pi_{W_1||V_1}\tilde{w}\|\geq(2\|\Pi_{V_1||W_1}\|+\|\Pi_{V_1||Y_1}\|\,\|\Pi_{W_1||V_1}\|)\frac{\|\mathcal{L}|_{V_1}\|}{\inf_{y\in Y_1\cap S}\|\mathcal{L}y\|}.
		\end{align*}
		Hence, we get
		\begin{equation*}
			\|\mathcal{L}\Pi_{W_1||V_1}\tilde{w}\|\geq 2\|\Pi_{V_1||W_1}\|\,\|\mathcal{L}|_{V_1}\|>0
		\end{equation*}
		and
		\begin{equation*}
			\frac{\|\mathcal{L}\Pi_{V_1||W_1}\tilde{w}\|}{\|\mathcal{L}\Pi_{W_1||V_1}\tilde{w}\|}\leq\frac{1}{2}.
		\end{equation*}
		Finally, it holds
		\begin{align*}
			\textnormal{d}\left(\frac{\mathcal{L}\tilde{w}}{\|\mathcal{L}\tilde{w}\|},\mathcal{L}W_1\right)&\leq\left\|\frac{\mathcal{L}\tilde{w}}{\|\mathcal{L}\tilde{w}\|}-\frac{\mathcal{L}\Pi_{W_1||V_1}\tilde{w}}{\|\mathcal{L}\tilde{w}\|}\right\|\\
			&=\frac{\|\mathcal{L}\Pi_{V_1||W_1}\tilde{w}\|}{\|\mathcal{L}\tilde{w}\|}\\
			&\leq\frac{\|\mathcal{L}\Pi_{V_1||W_1}\tilde{w}\|}{\|\mathcal{L}\Pi_{W_1||V_1}\tilde{w}\|-\|\mathcal{L}\Pi_{V_1||W_1}\tilde{w}\|}\\
			&=\frac{\|\mathcal{L}\Pi_{V_1||W_1}\tilde{w}\|}{\|\mathcal{L}\Pi_{W_1||V_1}\tilde{w}\|}\left(1-\frac{\|\mathcal{L}\Pi_{V_1||W_1}\tilde{w}\|}{\|\mathcal{L}\Pi_{W_1||V_1}\tilde{w}\|}\right)^{-1}\\
			&\leq 2\frac{\|\mathcal{L}\Pi_{V_1||W_1}\tilde{w}\|}{\|\mathcal{L}\Pi_{W_1||V_1}\tilde{w}\|}.
		\end{align*}
		Estimating the numerator and denominator as in the beginning of the proof, we arrive at \cref{eqn:MapEstimateBT2}. 
		
	\end{proof}

	\begin{corollary}\label[corollary]{cor:MapEstimateBT}
		
		Let $Y_i,V_i,W_i$ for $i=1,2$ and $\mathcal{L}$ be as in \cref{lemma:MapEstimateBT}.\par 
		
		If $\tilde{W}\subset W_2$ is a complement of $W_1$ in $W_2$ satisfying
		\begin{equation}
			\underset{\tilde{w}'\in\mathcal{L}\tilde{W}\cap S}{\inf}\,\textnormal{d}(\tilde{w}',\mathcal{L}W_1)\geq\delta
		\end{equation}
		for some $0<\delta\leq 1$, then
		\begin{multline}
			\underset{\tilde{w}\in\tilde{W}\cap B}{\sup}\,\textnormal{d}(\tilde{w},Y_2\cap B)\leq\\
			2\left(\frac{2}{\delta}\|\Pi_{V_1||W_1}\|+\|\Pi_{V_1||Y_1}\|\,\|\Pi_{W_1||V_1}\|\right)\frac{\|\mathcal{L}|_{V_1}\|}{\inf_{y\in Y_1\cap S}\|\mathcal{L}y\|}+2\|\Pi_{V_2||Y_1\oplus Y_2}|_{W_2}\|.
		\end{multline}
		
	\end{corollary}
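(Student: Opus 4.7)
The plan is to run \cref{lemma:MapEstimateBT} in the contrapositive direction: if $\mathcal{L}\tilde{W}$ stays uniformly separated from $\mathcal{L}W_1$ by at least $\delta$, then no element of $\tilde{W}$ can lie too far from $Y_2$ either. Fixing $\tilde{w} \in \tilde{W} \cap S$, I would first verify that $\mathcal{L}\tilde{w} \neq 0$: since $\tilde{W} \subset W_2$, $W_2 \cap V_2 = \{0\}$ (as $W_2$ is a complement of $V_2$), and $\ker \mathcal{L} \subset V_2$, we have $\tilde{w} \notin \ker\mathcal{L}$. Hence $\mathcal{L}\tilde{w}/\|\mathcal{L}\tilde{w}\| \in \mathcal{L}\tilde{W} \cap S$, and the corollary's hypothesis yields $\textnormal{d}(\mathcal{L}\tilde{w}/\|\mathcal{L}\tilde{w}\|,\mathcal{L}W_1)\geq\delta$.

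I would then split into two cases. If $\textnormal{d}(\tilde{w},Y_2)$ is large enough to satisfy \cref{eqn:MapEstimateBT}, then \cref{lemma:MapEstimateBT} supplies the upper bound \cref{eqn:MapEstimateBT2} on $\textnormal{d}(\mathcal{L}\tilde{w}/\|\mathcal{L}\tilde{w}\|,\mathcal{L}W_1)$, which in turn must exceed $\delta$. Solving this algebraic inequality for $\textnormal{d}(\tilde{w},Y_2)$ rearranges to the bound $\bigl(\tfrac{2}{\delta}\|\Pi_{V_1||W_1}\|+\|\Pi_{V_1||Y_1}\|\,\|\Pi_{W_1||V_1}\|\bigr)\|\mathcal{L}|_{V_1}\|/\inf_{y\in Y_1\cap S}\|\mathcal{L}y\|+\|\Pi_{V_2||Y_1\oplus Y_2}|_{W_2}\|$. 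If \cref{eqn:MapEstimateBT} fails instead, then $\textnormal{d}(\tilde{w},Y_2)$ is already dominated by the analogous expression with $2$ in place of $2/\delta$, and since $\delta\leq 1$ the case-one bound is no smaller. Either way we obtain this uniform upper bound on $\textnormal{d}(\tilde{w},Y_2)$ for all $\tilde{w}\in\tilde{W}\cap S$.

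The final step is to pass from $\textnormal{d}(\tilde{w},Y_2)$ with $\tilde{w}\in S$ to $\textnormal{d}(\tilde{w},Y_2\cap B)$ with $\tilde{w}\in B$. Since $\textnormal{d}(\cdot,Y_2)$ is positively homogeneous in its first argument, the supremum over $\tilde{W}\cap B$ is attained in the limit at $\tilde{W}\cap S$ and no loss occurs there. To deal with the intersection with $B$ in the target, I would use the standard rescaling trick: for any near-approximant $y\in Y_2$ with $\|\tilde{w}-y\|$ close to $\textnormal{d}(\tilde{w},Y_2)$, one has $\|y\|\leq 1+\|\tilde{w}-y\|$, and replacing $y$ by $y/\max(1,\|y\|)\in Y_2\cap B$ costs at most another summand $\|y\|-1\leq\|\tilde{w}-y\|$ in distance. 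This yields $\textnormal{d}(\tilde{w},Y_2\cap B)\leq 2\,\textnormal{d}(\tilde{w},Y_2)$ for $\tilde{w}\in B$, and multiplying the previous bound by this factor of $2$ reproduces the stated inequality. The only step requiring care is the algebraic inversion of the bound from \cref{lemma:MapEstimateBT}; I do not foresee any essential obstacle beyond bookkeeping.
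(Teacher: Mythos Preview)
Your proposal is correct and follows essentially the same approach as the paper. The paper phrases the main step as a single contradiction (assume the target bound with $2/\delta$ fails, note that this already forces \cref{eqn:MapEstimateBT} since $\delta\leq 1$, then derive that the lemma's upper bound is strictly below $\delta$), whereas you split into two cases according to whether \cref{eqn:MapEstimateBT} holds; these are logically equivalent. For the final factor of $2$, the paper cites Kato's inequality $\sup_{\tilde{w}\in\tilde{W}\cap S}\textnormal{d}(\tilde{w},Y_2\cap S)\leq 2\sup_{\tilde{w}\in\tilde{W}\cap S}\textnormal{d}(\tilde{w},Y_2)$ followed by $\sup_{\tilde{W}\cap B}\textnormal{d}(\cdot,Y_2\cap B)\leq\sup_{\tilde{W}\cap S}\textnormal{d}(\cdot,Y_2\cap S)$, while your direct rescaling argument achieves the same bound.
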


	\begin{proof}
		
		Assume $\tilde{w}\in\tilde{W}\cap S$ fulfills
		\begin{multline*}
			\textnormal{d}(\tilde{w},Y_2)>\\
			\left(\frac{2}{\delta}\|\Pi_{V_1||W_1}\|+\|\Pi_{V_1||Y_1}\|\,\|\Pi_{W_1||V_1}\|\right)\frac{\|\mathcal{L}|_{V_1}\|}{\inf_{y\in Y_1\cap S}\|\mathcal{L}y\|}+\|\Pi_{V_2||Y_1\oplus Y_2}|_{W_2}\|,
		\end{multline*}
		then by \cref{lemma:MapEstimateBT}
		\begin{multline*}
			\delta\leq\textnormal{d}\left(\frac{\mathcal{L}\tilde{w}}{\|\mathcal{L}\tilde{w}\|},\mathcal{L}W_1\right)\leq\\
			\frac{2\|\mathcal{L}|_{V_1}\|\,\|\Pi_{V_1||W_1}\|}{\left(\inf_{y\in Y_1\cap S}\|\mathcal{L}y\|\right)\left(\textnormal{d}(\tilde{w},Y_2)-\|\Pi_{V_2||Y_1\oplus Y_2}|_{W_2}\|\right)-\|\mathcal{L}|_{V_1}\|\,\|\Pi_{V_1||Y_1}\|\,\|\Pi_{W_1||V_1}\|}.
		\end{multline*}
		However, the former would be strictly smaller than $\delta$ by our assumption on $\textnormal{d}(\tilde{w},Y_2)$. Hence, we must have
		\begin{multline*}
			\underset{\tilde{w}\in\tilde{W}\cap S}{\sup}\,\textnormal{d}(\tilde{w},Y_2)\leq\\
			\left(\frac{2}{\delta}\|\Pi_{V_1||W_1}\|+\|\Pi_{V_1||Y_1}\|\,\|\Pi_{W_1||V_1}\|\right)\frac{\|\mathcal{L}|_{V_1}\|}{\inf_{y\in Y_1\cap S}\|\mathcal{L}y\|}+\|\Pi_{V_2||Y_1\oplus Y_2}|_{W_2}\|.
		\end{multline*}
		By \cite[chapter IV, §2.1]{Kato.1995} it holds
		\begin{equation*}
			\underset{\tilde{w}\in\tilde{W}\cap S}{\sup}\,\textnormal{d}(\tilde{w},Y_2\cap S)\leq 2\underset{\tilde{w}\in\tilde{W}\cap S}{\sup}\,\textnormal{d}(\tilde{w},Y_2).
		\end{equation*}
		Since
		\begin{equation*}
			\underset{\tilde{w}\in\tilde{W}\cap B}{\sup}\,\textnormal{d}(\tilde{w},Y_2\cap B)\leq \underset{\tilde{w}\in\tilde{W}\cap S}{\sup}\,\textnormal{d}(\tilde{w},Y_2\cap S),
		\end{equation*}
		the claim follows.	
			
	\end{proof}

	From \cref{cor:MapEstimateBT} we can derive an upper bound on the distance between $\tilde{W}$ and $Y_2$ from a lower bound on the degree of transversality of $(\mathcal{L}\tilde{W},\mathcal{L}W_1)$. In that sense, the corollary is similar to \cref{lemma:MapEstimateFT} but with backward propagation.\par 
	
	Next, we use the spaces $W_1,W_2$ to connect estimates from \cref{subsec:ForwardTime} to backward propagation, ultimately giving us an understanding of Ginelli's algorithm at the level of maps.
	
	\begin{lemma}\label[lemma]{lemma:SequenceEstimateBT}
		
		Let $(Y_1,V_1)\in\textnormal{Comp}_{k_1}(X)$, $(Y_2,V_2)\in\textnormal{Comp}_{k_2}(V_1)$, and $\infty>\lambda_{Y_1}>\lambda_{V_1}=\lambda_{Y_2}>\lambda_{V_2}\geq-\infty$.\par 
		
		For the past data, let $(Y_1(-n),V_1(-n))\in\textnormal{Comp}_{k_1}(X)$ and $(Y_2(-n),V_2(-n))$ $\in\textnormal{Comp}_{k_2}(V_1(-n))$ for $n\in\natural$. Assume we have bounded linear maps $\mathcal{L}(-n)\in L(X)$ respecting the splittings , i.e., $\mathcal{L}(-n)Y_i(-n)\subset Y_i$ for $i=1,2$ and $\mathcal{L}(-n)$ $V_2(-n)\subset V_2$, such that $\ker\mathcal{L}(-n)\subset V_2(-n)$ for $n\in\natural$. Moreover, assume that
		\begin{enumerate}
			\item $\lim_{n\to\infty}(1/n)\log\|\Pi_{V_1(-n)||Y_1(-n)}\|=0$,
			\item $\lim_{n\to\infty}(1/n)\log\|\Pi_{V_2(-n)||Y_1(-n)\oplus Y_2(-n)}\|=0$,
			\item $\limsup_{n\to\infty}(1/n)\log\|\mathcal{L}(-n)|_{V_i(-n)}\|\leq\lambda_{V_i}$ for $i=1,2$,
			\item $\liminf_{n\to\infty}\inf_{y\in Y_1(-n)\cap S}(1/n)\log\|\mathcal{L}(-n)y\|\geq \lambda_{Y_1}$, and
			\item $\liminf_{n\to\infty}\inf_{y\in Y_1(-n)\oplus Y_2(-n)\cap S}(1/n)\log\|\mathcal{L}(-n)y\|\geq \lambda_{Y_2}$.
		\end{enumerate}\par
	
		For the future data, let $(Y_1(n),V_1(n))\in\textnormal{Comp}_{k_1}(X)$ and $(Y_2(n),V_2(n))\in\textnormal{Comp}_{k_2}(V_1(n))$ for $n\in\natural$. Assume we have bounded linear maps $\mathcal{L}(n)\in L(X)$ respecting the splittings , i.e., $\mathcal{L}(n)Y_i\subset Y_i(n)$ for $i=1,2$ and $\mathcal{L}(n)V_2\subset V_2(n)$, such that $\ker\mathcal{L}(n)\subset V_2$ for $n\in\natural$. Moreover, assume that
		\begin{enumerate}
			\item[6.] $\limsup_{n\to\infty}(1/n)\log\|\mathcal{L}(n)|_{V_1}\|\leq\lambda_{V_1}$ and
			\item[7.] $\liminf_{n\to\infty}\inf_{y\in Y_1\cap S}(1/n)\log\|\mathcal{L}(n)y\|\geq \lambda_{Y_1}$.
		\end{enumerate}\par
	
		Let $W_i$ be a well-separating common complement of $(V_i(-n))_{n\in\natural}$ for $i=1,2$ such that $W_1\subset W_2$. If $(\tilde{W}(n_1,n_2))_{n_1,n_2\in\natural}$ is a family of subspaces such that $\mathcal{L}(n_2)\mathcal{L}(-n_1)W_1\oplus\tilde{W}(n_1,n_2)=\mathcal{L}(n_2)\mathcal{L}(-n_1)W_2$, and if
		\begin{equation}\label[equation]{eqn:SequenceEstimateBT}
			\underset{\tilde{w}\in\tilde{W}(n_1,n_2)\cap S}{\inf}\,\textnormal{d}(\tilde{w},\mathcal{L}(n_2)\mathcal{L}(-n_1)W_1)\geq\delta
		\end{equation}
		for some constant $0<\delta\leq 1$, then
		\begin{multline}\label[equation]{eqn:SequenceEstimateBT2}
			\limsup_{N\to\infty}\underset{n_1,n_2\geq N}{\sup}\,\frac{1}{\min(n_1,n_2)}\log\textnormal{d}_{\mathcal{G}}\left(\left(\mathcal{L}(n_2)|_{\mathcal{L}(-n_1)W_2}\right)^{-1}\tilde{W}(n_1,n_2),Y_2\right)\\
			\leq - \min(|\lambda_{Y_2}-\lambda_{Y_1}|,|\lambda_{Y_2}-\lambda_{V_2}|).
		\end{multline}
		
	\end{lemma}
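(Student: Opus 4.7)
The plan is to invoke Corollary \ref{cor:MapEstimateBT} at the ``present'' state by setting $W_i^{(n_1)} := \mathcal{L}(-n_1)W_i$ for $i=1,2$, using the map $\mathcal{L}(n_2)$ in place of $\mathcal{L}$, and choosing the complement
\[
\tilde{W}^{(n_1,n_2)} := \left(\mathcal{L}(n_2)|_{W_2^{(n_1)}}\right)^{-1}\tilde{W}(n_1, n_2).
\]
Since $\mathcal{L}(n_2)\tilde{W}^{(n_1,n_2)} = \tilde{W}(n_1,n_2)$ and $\mathcal{L}(n_2)W_1^{(n_1)} = \mathcal{L}(n_2)\mathcal{L}(-n_1)W_1$, the transversality hypothesis of the corollary with constant $\delta$ is exactly \eqref{eqn:SequenceEstimateBT}. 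Its conclusion bounds $\sup_{\tilde{w} \in \tilde{W}^{(n_1,n_2)} \cap B} \textnormal{d}(\tilde{w}, Y_2 \cap B)$, which, via Lemma \ref{lemma:SymmetryOfCloseness}, will convert to the Grassmannian estimate in \eqref{eqn:SequenceEstimateBT2}.

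The first step is to verify the structural prerequisites. Applying Lemma \ref{lemma:SequenceEstimateFT2} to the splittings $(Y_1, V_1)$ and $(Y_1 \oplus Y_2, V_2)$ under past-data assumptions 1--5, the subspaces $W_1^{(n_1)}$ and $W_2^{(n_1)}$ converge exponentially fast in the Grassmannian to $Y_1$ and $Y_1 \oplus Y_2$, respectively. Lemma \ref{lemma:ProjectionStable} then guarantees that for all $n_1$ beyond some threshold, the pairs $(W_1^{(n_1)}, V_1)$ and $(W_2^{(n_1)}, V_2)$ are genuine direct sum decompositions of $X$. Combined with $\ker \mathcal{L}(n_2) \subset V_2$, this makes $\mathcal{L}(n_2)|_{W_2^{(n_1)}}$ injective, so $\tilde{W}^{(n_1,n_2)}$ is well-defined and complements $W_1^{(n_1)}$ in $W_2^{(n_1)}$.

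The second step is to estimate each factor in the bound from Corollary \ref{cor:MapEstimateBT}. The norms $\|\Pi_{V_1||W_1^{(n_1)}}\|$ and $\|\Pi_{W_1^{(n_1)}||V_1}\|$ remain uniformly bounded in $n_1$ by Lemma \ref{lemma:ProjectionStable} together with $W_1^{(n_1)} \to Y_1$. The quotient $\|\mathcal{L}(n_2)|_{V_1}\|/\inf_{y \in Y_1 \cap S}\|\mathcal{L}(n_2)y\|$ decays exponentially in $n_2$ at rate $-|\lambda_{Y_1} - \lambda_{V_1}| = -|\lambda_{Y_2}-\lambda_{Y_1}|$ by future-data assumptions 6--7. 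Finally, $\|\Pi_{V_2||Y_1 \oplus Y_2}|_{W_2^{(n_1)}}\|$ decays exponentially in $n_1$ at rate $-|\lambda_{Y_2}-\lambda_{V_2}|$ by Corollary \ref{cor:SequenceEstimateFT} applied to the splitting $(Y_1 \oplus Y_2, V_2)$ with maps $\mathcal{L}(-n)$ and the well-separating complement $W_2$. Substituting into Corollary \ref{cor:MapEstimateBT}, the right-hand side is, for every $\epsilon > 0$ and all $n_1, n_2 \geq N(\epsilon)$, dominated by $C_1 e^{-n_2(|\lambda_{Y_2}-\lambda_{Y_1}|-\epsilon)} + C_2 e^{-n_1(|\lambda_{Y_2}-\lambda_{V_2}|-\epsilon)}$. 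Taking logarithms, dividing by $\min(n_1,n_2)$, applying Lemma \ref{lemma:SymmetryOfCloseness}, and letting $\epsilon \to 0$ yields \eqref{eqn:SequenceEstimateBT2}.

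The main obstacle I anticipate is the uniform bookkeeping: each factor in Corollary \ref{cor:MapEstimateBT} must be controlled with the correct behaviour in the two independent parameters $n_1$ and $n_2$. In particular, applying Corollary \ref{cor:SequenceEstimateFT} to the splitting $(Y_1 \oplus Y_2, V_2)$ requires matching the lemma's past-data assumptions 2, 3, 5 to the hypotheses of Lemma \ref{lemma:SequenceEstimateFT2}, and the well-separating property of $W_2$ with respect to $(V_2(-n))$ is crucial here. Once these matchings are in place, the rest is a mechanical combination of the forward-time tools of \cref{subsec:ForwardTime} with the backward-time building block Corollary \ref{cor:MapEstimateBT}.
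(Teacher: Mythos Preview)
Your proposal is correct and follows essentially the same approach as the paper: apply \cref{cor:MapEstimateBT} with $W_i$ replaced by $\mathcal{L}(-n_1)W_i$ and $\mathcal{L}=\mathcal{L}(n_2)$, then control the individual factors via \cref{lemma:SequenceEstimateFT2}, \cref{cor:SequenceEstimateFT}, \cref{lemma:ProjectionStable}, and the future growth assumptions, before concluding with \cref{lemma:SymmetryOfCloseness}. One minor remark: the fact that $(\mathcal{L}(-n_1)W_i,V_i)\in\textnormal{Comp}_{k_i}(X)$ actually holds for \emph{all} $n_1$ directly from the equivariance of the splittings and $\ker\mathcal{L}(-n_1)\subset V_2(-n_1)$, so you need not invoke \cref{lemma:ProjectionStable} for that step (the lemma is about continuity of projections, not openness of $\textnormal{Comp}_k(X)$); but this does not affect the argument.
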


	\begin{proof}
		
		Let $W_1$ and $W_2$ be as in the claim. We apply \cref{lemma:SequenceEstimateFT2} to $(Y,V)=(Y_1,V_1)$ for $W=W_1$ and to $(Y,V)=(Y_1\oplus Y_2,V_2)$ for $W=W_2$ with their respective spaces and mappings at $-n$. It follows that
		\begin{equation*}
			\limsup_{n\to\infty}\frac{1}{n}\log\textnormal{d}_{\mathcal{G}}(\mathcal{L}(-n)W_1,Y_1)\leq -|\lambda_{Y_1}-\lambda_{V_1}|
		\end{equation*}
		and
		\begin{equation*}
			\limsup_{n\to\infty}\frac{1}{n}\log\textnormal{d}_{\mathcal{G}}(\mathcal{L}(-n)W_2,Y_1\oplus Y_2)\leq -|\lambda_{Y_2}-\lambda_{V_2}|.
		\end{equation*}
		Thus, we have good approximations of $Y_1$ and $Y_1\oplus Y_2$ from the past data. Moreover, by \cref{cor:SequenceEstimateFT} we have
		\begin{equation*}
			\limsup_{n\to\infty}\frac{1}{n}\log\|\Pi_{V_2||Y_1\oplus Y_2}|_{\mathcal{L}(-n)W_2}\|\leq -|\lambda_{Y_2}-\lambda_{V_2}|.
		\end{equation*}
		Since $\mathcal{L}(-n)W_1$ converges to $Y_1$, the projections $\Pi_{\mathcal{L}(-n)W_1||V_1}$ converge to $\Pi_{Y_1||V_1}$ by \cref{lemma:ProjectionStable}. In particular, $\|\Pi_{\mathcal{L}(-n)W_1||V_1}\|$ and $\|\Pi_{V_1||\mathcal{L}(-n)W_1}\|$ are bounded from above by a constant independent of $n$.\par
		
		The growth rate assumptions for future data imply
		\begin{equation*}
			\limsup_{n\to\infty}\frac{1}{n}\log\frac{\|\mathcal{L}(n)|_{V_1}\|}{\inf_{y\in Y_1\cap S}\|\mathcal{L}(n)y\|}\leq - |\lambda_{Y_1}-\lambda_{V_1}|.
		\end{equation*}
		Now, apply \cref{cor:MapEstimateBT} to $(Y_1,V_1)$, $(Y_2,V_2)$, the complements $\mathcal{L}(-n_1)W_1$ of $V_1$ and $\mathcal{L}(-n_1)W_2$ of $V_2$, $\mathcal{L}=\mathcal{L}(n_2)$, and $\tilde{W}=\left(\mathcal{L}(n_2)|_{\mathcal{L}(-n_1)W_2}\right)^{-1}\tilde{W}(n_1,n_2)$. We get
		\begin{multline}\label[equation]{eqn:SequenceEstimateBT3}
			\underset{\tilde{w}\in\left(\mathcal{L}(n_2)|_{\mathcal{L}(-n_1)W_2}\right)^{-1}\tilde{W}(n_1,n_2)\cap B}{\sup}\,\textnormal{d}(\tilde{w},Y_2\cap B)\leq\\
			2\left(\frac{2}{\delta}\|\Pi_{V_1||\mathcal{L}(-n_1)W_1}\|+\|\Pi_{V_1||Y_1}\|\,\|\Pi_{\mathcal{L}(-n_1)W_1||V_1}\|\right)\frac{\|\mathcal{L}(n_2)|_{V_1}\|}{\inf_{y\in Y_1\cap S}\|\mathcal{L}(n_2)y\|}\\
			+2\|\Pi_{V_2||Y_1\oplus Y_2}|_{\mathcal{L}(-n_1)W_2}\|.
		\end{multline}
		In view of \cref{lemma:SymmetryOfCloseness}, all that remains to prove \cref{eqn:SequenceEstimateBT2} is to insert respective asymptotics into the terms of \cref{eqn:SequenceEstimateBT3}. Indeed, the terms inside the large brackets are bounded from above by a constant, and the other terms can be estimated as above.
		
	\end{proof}

	\cref{lemma:SequenceEstimateBT} provides an appropriate tool to investigate convergence of the Ginelli algorithm. Since Ginelli's algorithm initiates vectors for the backward propagation inside spaces from the forward propagation, which vary with the chosen runtime, the domain for initial vectors is not constant. This poses a problem when talking about convergence with respect to initial conditions. One way to solve this problem is to express the initial vectors of the backward propagation in terms of runtime-independent coefficients. In other words, we want to find a family of isomorphisms identifying $\mathcal{L}(n_1)\mathcal{L}(-n_2)W_2$ with $\real^{k_1+k_2}$.\par 
	
	If $X=H$ is a Hilbert space, then we may identify an orthonormal basis of $\mathcal{L}(n_1)\mathcal{L}(-n_2)W_2$ with the standard basis of $(\real^{k_1+k_2}, \|.\|_2)$. The identification defines an isometry leaving distances and angles invariant. In particular, we may check \cref{eqn:SequenceEstimateBT} on the coefficient space.

\subsection{Convergence proof}\label{subsec:ConvergenceProof}
	
	In this subsection we combine our tools to prove \cref{thm:Convergence}.
	
	\begin{proof}[proof of \cref{thm:Convergence}]
		
		First, we set $\Omega'\subset\Omega$ to be the subset of full $\mathbb{P}$-measure on which the Oseledets splitting is defined and on which \cref{eqn:FiltrationUniformEstimateFT,eqn:FiltrationUniformEstimateFT2,eqn:SplittingUniformEstimateFT,eqn:FiltrationUniformEstimateBT,eqn:FiltrationUniformEstimateBT2,eqn:SplittingUniformEstimateBT} hold. Fix some $\omega\in\Omega'$.\par 
		
		Let $\mathbf{F}_j\subset H^{m_1+\dots+m_j}$ be the subset of all tuples inducing well-separating common complements of $(V_{j+1}(\sigma^{-n}\omega))_{n\in\natural}$ for $j=1,\dots,p$. Then, the set
		\begin{equation}
			\mathbf{F}:=\left(\mathbf{F}_1\times H^{m_2+\dots+m_p}\right)\cap\left(\mathbf{F}_2\times H^{m_3+\dots+m_p}\right)\cap\dots\cap \mathbf{F}_p\subset H^k
		\end{equation}
		consists of tuples $(x_{1_1},\dots,x_{p_{m_p}})$ such that $\textnormal{span}(x_{1_1},\dots,x_{j_{m_j}})$ is a well-separat{\-}ing common complement of $(V_{j+1}(\sigma^{-n}\omega))_{n\in\natural}$ for each $j=1,\dots,p$. In particular, since products and intersections of prevalent sets are prevalent, \cref{thm:WellSeparating} implies that $\mathbf{F}$ is prevalent. We use elements of $\mathbf{F}$ as initial vectors for the forward propagation in Ginelli's algorithm.\par 
		
		Let $\mathbf{B}\subset\real^{k\times k}_{ru}$ be the subset of upper diagonal matrices with non-zero diagonal elements, i.e., the subset of invertible upper diagonal matrices. $\mathbf{B}$ has full Lebesgue measure and is used for initial vectors for the backward propagation in Ginelli's algorithm.\par 
		
		Now, let $((x_1,\dots,x_k),R)\in\mathbf{F}\times\mathbf{B}$ be an input for Ginelli's algorithm. According to \cref{thm:ConvergenceBSFT} the first set of vectors $(x_{1_1},\dots,x_{1_{m_1}})$ gives an approximation of $Y_1(\omega)$ via the first step of Ginelli's algorithm. The remaining steps of Ginelli's algorithm do not change this approximation. In fact, the first set of output vectors $(G_{\omega,k}^{n_1,n_2})_{1_i}$ for $i=1,\dots,m_1$ at $((x_1,\dots,x_k),R)$ spans the same space as $\left(\mathcal{L}_{\sigma^{-n_1}\omega}^{(n_1)}x_1,\dots,\mathcal{L}_{\sigma^{-n_1}\omega}^{(n_1)}x_k\right)$. Thus, we have
		\small
		\begin{multline*}
			\limsup_{N\to\infty}\underset{n_1,n_2\geq 	N}{\sup}\,\frac{1}{\min(n_1,n_2)}\log\textnormal{d}_{\mathcal{G}}\left(\textnormal{span}\left\{\left.\left(G_{\omega,k}^{n_1,n_2}\right)_{1_i}\ \right|\ i=1,\dots,m_1\right\},Y_1(\omega)\right)\\
			\leq -|\lambda_1-\lambda_2|= -\min\left(|\lambda_1-\lambda_0|,|\lambda_1-\lambda_2|\right)
		\end{multline*} 	
		\normalsize
		at $((x_1,\dots,x_k),R)$.\par 
		
		Convergence of the remaining spaces is due to \cref{lemma:SequenceEstimateBT}. Indeed, fix some $1<j\leq p$. We set $Y_1=Y_1(\omega)\oplus\dots\oplus Y_{j-1}(\omega)$, $V_1=V_j(\omega)$, $Y_2=Y_j(\omega)$, $V_2=V_{j+1}(\omega)$, $\mathcal{L}(-n)=\mathcal{L}_{\sigma^{-n}\omega}^{(n)}$, $\mathcal{L}(n)=\mathcal{L}_{\omega}^{(n)}$, and spaces $Y_i(\pm n)$ and $V_i(\pm n)$ for $i=1,2$ accordingly. The growth rates in \cref{lemma:SequenceEstimateBT} are given by \cref{thm:MET} and its proof. Furthermore, let $W_1=\textnormal{span}(x_{1_1},\dots,x_{(j-1)_{m_{j-1}}})$ and $W_2=\textnormal{span}(x_{1_1},\dots,x_{j_{m_j}})$ be the well-separating common complements, which approximate $Y_1$ and $Y_2$ in the first step of Ginelli's algorithm. The family of spaces $(\tilde{W}(n_1,n_2))_{n_1,n_2\in\natural}$ is given by $\textnormal{span}(y_{j_1}^1,\dots,y_{j_{m_j}}^1)$ via vectors of the fourth step of the algorithm. Indeed, the $j_1^{\textnormal{th}}$ to $j_{m_j}^{\textnormal{th}}$ column 
		\begin{equation*}
			[r_{j_1}|\dots|r_{j_{m_j}}]:=
			\begin{bmatrix}
				
					*           & \cdots & *                   \\
					\vdots      &        & \vdots              \\
					*           & \cdots & *                   \\
					r_{j_1,j_1} & \cdots & r_{j_1,j_{m_j}}     \\
					            & \ddots & \vdots              \\
					0           &        & r_{j_{m_j},j_{m_j}} \\
					0           & \cdots & 0                   \\
					\vdots      &        & \vdots              \\
					0           & \cdots & 0 
					
			\end{bmatrix}
		\end{equation*}
		of $R$ give us coefficients with which we may express $y_{j_1}^1,\dots,y_{j_{m_j}}^1$ in terms of the orthonormalized vectors
		\begin{multline*}
			GS(\mathcal{L}(n_2)\mathcal{L}(-n_1)x_{1_1},\dots,\mathcal{L}(n_2)\mathcal{L}(-n_1)x_{j_{m_j}})\\
			=GS\left(\mathcal{L}_{\sigma^{-n_1}\omega}^{(n_1+n_2)}x_{1_1},\dots,\mathcal{L}_{\sigma^{-n_1}\omega}^{(n_1+n_2)}x_{j_{m_j}}\right),
		\end{multline*}
		which emerge in the third step of Ginelli's algorithm. Through the identification via $GS$, \cref{eqn:SequenceEstimateBT} may be checked on coefficient space. Since $\mathcal{L}(n_2)\mathcal{L}(-n_1)W_1$ is mapped to $\real^{m_1+\dots+m_{j-1}}\times\{0\}\subset\real^k$ and $\mathcal{L}(n_2)\mathcal{L}(-n_1)W_2$ to $\real^{m_1+\dots+m_{j}}\times\{0\}\subset\real^k$, we need to check if
		\begin{equation*}
			\underset{r\in\textnormal{span}\left(r_{j_1},\dots,r_{j_{m_j}}\right)\cap S}{\inf}\,\|\Pi_jr\|>0,
		\end{equation*}
		where $\Pi_j:\real^k\to\{0\}\times\real^{m_j}\times\{0\}$ is the projection onto the $j_1^{\textnormal{th}}$ to $j_{m_j}^{\textnormal{th}}$ coordinates. This is easily verified, since $R$ is an upper diagonal matrix with non-zero elements on the diagonal. Thus, we may apply \cref{lemma:SequenceEstimateBT} to see that the span of the $j_1^{\textnormal{th}}$ to $j_{m_j}^{\textnormal{th}}$ vector from the fifth step of Ginelli's algorithm approximates $Y_j(\omega)$ at the desired speed. This concludes the proof.\footnote{The last step of Ginelli's algorithm only normalizes computed vectors. It does not change their linear span and, thus, plays no role in \cref{eqn:ConvergenceProof}. However, the step is a necessary part of the algorithm, since CLVs are defined as normalized basis vectors of $Y_j(\omega)$.}
		
	\end{proof}

\bigskip
	
	\section{Conclusions}\label{sec:Conclusions}

	With the emergence of semi-invertible METs, the concept of CLVs has been opened up to new settings. In particular, various infinite-dimensional versions of the MET have been proved. In this article we followed the semi-invertible MET from \cite{GonzalezTokmanQuas.2014} to generalize Ginelli's algorithm for CLVs. Our main result is a convergence proof of the algorithm in the context of Hilbert spaces. The proof not only generalizes previous analysis of Ginelli's algorithm and its features \cite{Noethen.2019,GinelliChateLiviPoliti.2013,ErshovPotapov.1998} to an infinite-dimensional setting, but also treats the case of non-invertible linear propagators. We formulated most arguments in the context of maps on Banach spaces before connecting them to basic asymptotic properties of the Oseledets splitting. Since those properties appear in most versions of the MET, our convergence proof may be translated to other settings as well.\par 
	
	We split the proof into estimates for forward and for backward propagation. It turned out that, during forward propagation, almost every complement of spaces of the Oseledets filtration asymptotically aligns with the Oseledets spaces. The fact that complements generically align in forward-time even holds if we only have an Oseledets filtration. For backward propagation, we had to restrict the propagator to certain subspaces, since it may not be globally invertible in a semi-invertible setting. Last but not least, we combined our estimates to form the convergence proof.\par 
	
	Throughout the proof, we connected estimates to the Lyapunov exponents that appear in the MET. Thus, we were able to relate Lyapunov exponents to the speed of convergence. As for the finite-dimensional case in \cite{Noethen.2019}, Ginelli's algorithm converges exponentially fast with a rate given by the spectral gap between associated Lyapunov exponents. However, note that the notation of our convergence theorem excludes subexponential prefactors of the speed of convergence. Especially in view of applications, those prefactors may very well be important. However, they depend on the particular system, and their derivation requires an in-depth analysis of the proof of the MET. Since we aimed for a dynamical approach that is not tailored to only one version of the MET, we leave the analysis of subexponential prefactors to future research.\par 
	
	While we successfully generalized and proved Ginelli's algorithm for infinite dimensions, it is primarily an analytical tool. The numerical computation of CLVs brings its own set of challenges. Indeed, our results may be seen as a help to understand limit cases of applications of Ginelli's algorithm for systems of increasingly higher resolutions. The transition between finite and infinite dimensions is still an open question and leads to the concept of stability of CLVs. Additionally, numerical inaccuracies in computing the linear propagator may result in a different output of Ginelli's algorithm. In fact, the MET only guarantees that CLVs depend measurably on the trajectory.\par 
	
	Despite the remaining challenges, we made a big step towards computing CLVs in infinite dimensions. Through the connection to semi-invertible METs, our research applies to recent developments in the context of CLVs and paves the way for new advancements of both analytical and numerical aspects of CLV-algorithms.

\bigskip
	
	\section*{Acknowledgments}

	This paper is a contribution to the project M1 (Instabilities across scales and statistical mechanics of multi-scale GFD systems) of the Collaborative Research Centre TRR 181 "Energy Transfer in Atmosphere and Ocean" funded by the Deutsche Forschungsgemeinschaft (DFG, German Research Foundation) - Projektnummer 274762653.

\bigskip
	
	\printbibliography

	
\end{document}